\newtheorem{theorem}{Theorem}
\newtheorem{remark}{Remark}
\newtheorem{lemma}{Lemma}
\newtheorem{example}{Example}
\newtheorem{corollary}{Corollary}
\newtheorem{proposition}{Proposition}
\newtheorem{proof}{Proof}
\newcommand{\paren}[1]{\left(#1\right)}
\DeclareMathOperator{\tr}{tr}
\newcommand{\e}{\mathrm{e}}
\newcommand{\D}{\mathrm{d}}
\newcommand{\Real}{\mathbb{R}}
\newcommand{\bb}{\mathbb}
\def\qed{\hfill $\Box$}
\begin{document}
%
\title{Target Controllability Scores for\\ Actuation-Constrained Network Intervention}
\author{Kazuhiro Sato\thanks{K. Sato is with the Department of Mathematical Informatics, Graduate School of Information Science and Technology, The University of Tokyo, Tokyo 113-8656, Japan, email: kazuhiro@mist.i.u-tokyo.ac.jp}}
\maketitle
\thispagestyle{empty}
\pagestyle{empty}

\begin{abstract}
We introduce the target controllability score (TCS), a concept for evaluating node importance under actuator constraints and designated target objectives, formulated within a virtual system setting.
The TCS consists of the target volumetric controllability score (VCS) and the target average energy controllability score (AECS), each defined as an optimal solution to a convex optimization problem associated with the output controllability Gramian.
We establish existence and uniqueness (for almost all time horizons), develop a projected gradient method for computation, and show that target VCS/AECS can behave qualitatively differently from their standard full-state counterparts because projection onto the target nodes changes the underlying Gramian structure. 
To enable scalability, we construct a target-only reduced virtual system and derive non-asymptotic bounds showing that weak cross-coupling and a low or negative logarithmic norm of the system matrix yield accurate approximations of target VCS/AECS, particularly over short or moderate time horizons.
Experiments on human brain networks reveal a clear trade-off: at short horizons, both target VCS and target AECS are well approximated by their reduced formulations, while at long horizons, target AECS remains robust but target VCS deteriorates.
\end{abstract}

\begin{IEEEkeywords}
Brain networks, controllability, convex optimization, network centrality, reduced-order modeling
\end{IEEEkeywords}

\IEEEpeerreviewmaketitle

\section{Introduction} \label{sec:intro}
Identifying the roles of individual nodes in complex systems---such as brain networks, social systems, or infrastructure systems---remains a central challenge in network science \cite{cimpeanu2023social, fang2016resilience, faramondi2020multi, kong2022influence, sporns2010networks, van2013network}. 
To address this, a wide range of centrality measures have been proposed. While these measures differ in scope and methodology, they are commonly classified into three categories: structural, spectral, and dynamics-aware. Structural and spectral measures---including degree, betweenness, closeness, eigenvector centrality, and PageRank---rely on network topology and, in some cases, edge weights, but ultimately provide static evaluations determined by the underlying structure \cite{bloch2023centrality, rodrigues2019network}.
By contrast, dynamics-aware measures go beyond topology by relying on the system matrix to capture time evolution. This dynamical perspective is crucial in systems where the propagation of signals, information, or energy evolves over time, meaning that node importance cannot be fully understood from static connectivity alone \cite{liu2011controllability, lu2016vital,tang2025network}.

Among such dynamics-aware approaches, controllability plays a central role, as it captures the fundamental ability of a system to be steered through external interventions \cite{kalman1960general, kalman1963mathematical}. Qualitative controllability---often referred to as structural controllability---focuses on whether a system is controllable for almost all numerical choices of edge weights, based solely on the sparsity pattern of the system matrix together with the actuator placement \cite{lin1974structural, liu2011controllability}. Graph theoretic tools such as maximum matchings and Dulmage--Mendelsohn decomposition are employed to determine the minimum number of inputs required for a network system to be controllable and to identify which state nodes should be actuated \cite{commault2024dilation,olshevsky2014minimal,pequito2015framework, ramos2022overview, terasaki2021minimal, terasaki2023minimal}.
In practice, however, structural controllability provides only a coarse guarantee. Even if a network is controllable in the qualitative sense, achieving control may require prohibitively large input energy, rendering the system effectively uncontrollable from a practical standpoint \cite{baggio2023controllability, pasqualetti2014controllability, summers2015submodularity}. This limitation highlights the need to move beyond a binary feasibility perspective and to quantify controllability in terms of the control effort required to steer the system.

In contrast, quantitative controllability enables the evaluation of the size of the reachable set and the amount of control energy required to reach a desired state, once the actual edge weights and time horizons are specified \cite{baggio2022energy, bof2016role, pasqualetti2014controllability, sato2020controllability, summers2015submodularity, lindmark2021centrality}.
However, existing dynamics-aware indices such as the Volumetric Control Energy (VCE) and Average Control Energy (ACE) centralities \cite{summers2015submodularity} have inherent limitations. They rely on single-input controllability Gramians, which often become nearly singular in large-scale networks \cite{baggio2023controllability, olshevsky2016eigenvalue, pasqualetti2014controllability, pasqualetti2019re, suweis2019brain,tu2018warnings}. 
As a result, these measures may fail to consistently reflect node importance---a shortcoming noted in \cite[Remarks~2 and~3]{sato2022controllability} and \cite[Section~IV-B]{sato2025uniqueness}.

To address this limitation, Sato and Terasaki \cite{sato2022controllability} introduced the controllability score (CS), a novel centrality metric defined for linear dynamical networks of the form
\begin{align}
\dot{x}(t) = Ax(t), \label{system0}
\end{align}
which quantifies each state node's ability to steer the system toward desired targets. 
Here, $x(t)\in\mathbb{R}^n$ collects the state variables assigned to the $n$ nodes of the network, and the system matrix $A=(a_{ij})\in\mathbb{R}^{n\times n}$ describes the weighted network topology, where each entry $a_{ij}$ specifies how the state of node $j$ influences that of node $i$.
This initial formulation was further developed in later work \cite{sato2025uniqueness}, which broadened the theoretical foundation of the CS and, through applications to brain networks, demonstrated that it uncovers node importance behaviors markedly different from those identified by conventional centrality measures.

The CS was formulated under an idealized setting in which virtual inputs could, in principle, be applied to all state nodes of system \eqref{system0}. To formalize this, the virtual system
\begin{align}
\dot x(t) = Ax(t) + {\rm diag}(\sqrt{p_1},\ldots, \sqrt{p_n})u(t) \label{eq:lti}
\end{align}
was introduced, establishing a one-to-one correspondence between virtual input nodes $u_i$ and state nodes $x_i$, thereby assigning to each node $x_i$ a weight $p_i$ that reflects the degree to which it can be directly actuated.  
However, external control signals are rarely available beyond a limited subset of nodes, a constraint encountered across a wide range of domains, including neuroscience and systems biology \cite{klickstein2017energy,gao2014target,guo2018novel,
manjunatha2024controlling}.
Restricting the CS to this accessible subset thus provides a principled way to identify intervention points that can steer the system’s dynamics.
This distinction between the conventional and practical settings is summarized in Fig.~\ref{fig:comparison}.

\begin{figure}[t]
    \centering
    \includegraphics[width=8cm]{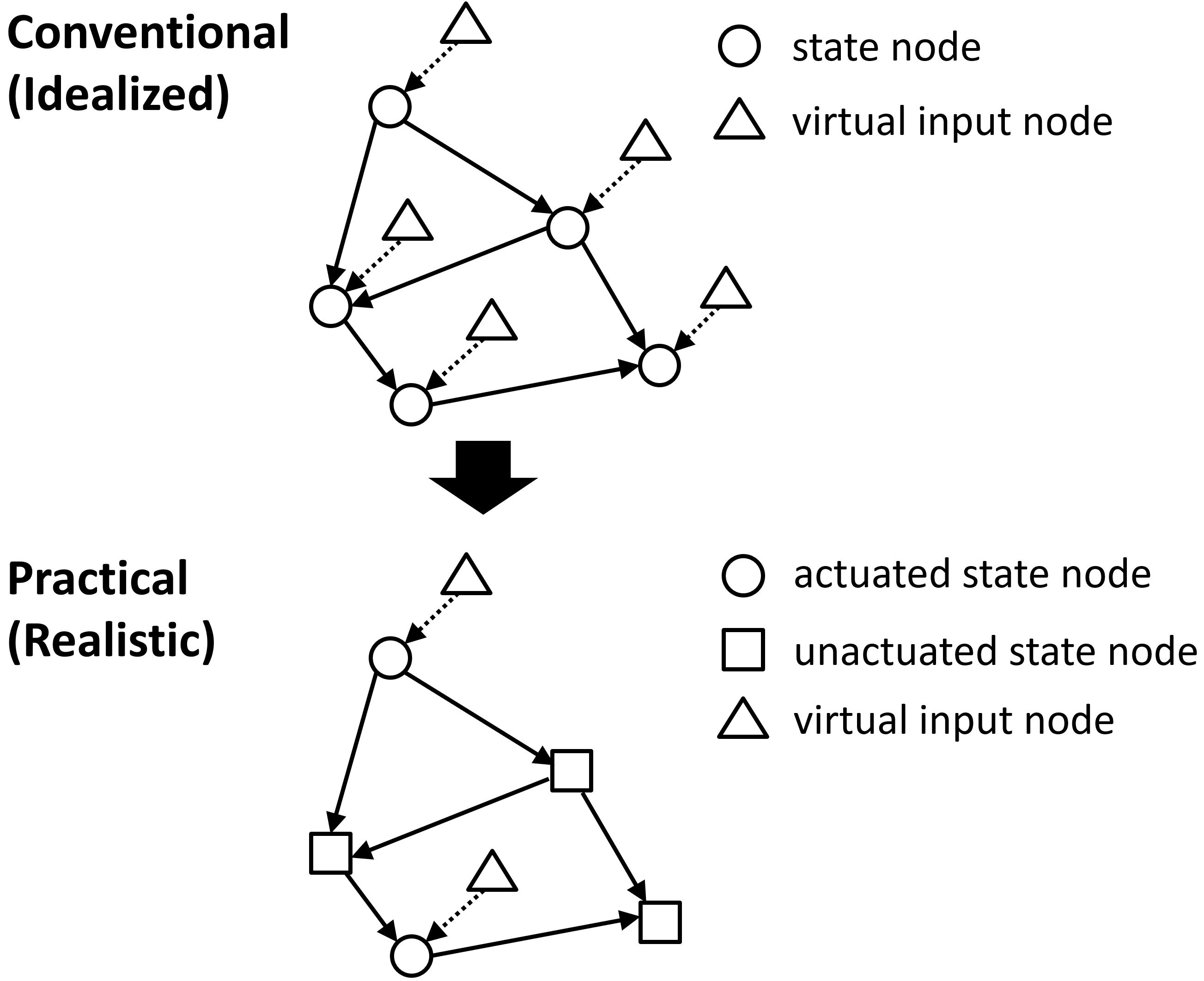}
    \caption{Comparison between the conventional and practical settings.}
    \label{fig:comparison}
\end{figure}

To address the limitation that only a subset of nodes is typically accessible for actuation, we introduce the target controllability score (TCS). This metric quantifies controllability-based importance for designated target nodes rather than all nodes. Unlike the original CS, TCS is specifically designed for scenarios where actuators are restricted to selected locations and the control objective concerns particular nodes. 
This formulation not only aligns the metric with realistic actuator placement and task objectives but also improves interpretability by focusing on a lower-dimensional target subspace.
Nevertheless, even when restricted to a designated subset of target nodes, computing the TCS for large-scale networks is computationally prohibitive. 
This is because it is defined using the output controllability Gramian, whose computation involves evaluating matrix exponentials or equivalent procedures with cubic complexity in the network dimension
$n$, although specialized methods can mitigate this cost.

A natural approach to resolve this difficulty is to construct a reduced model retaining only the target variables. 
In this reduced representation, the system dynamics are projected onto the subspace spanned by the chosen targets, yielding a model of dimension $m$, the number of target nodes. 
We then analyze the error in the CS computed from this reduced model, and identify structural properties of the original system matrix that ensure the error remains small. 
When such conditions are satisfied, the reduced model provides an efficient and interpretable surrogate for evaluating TCS in large-scale networks, substantially broadening the applicability of TCS to real-world systems.

The main contributions of this paper are as follows:
\begin{enumerate}
\item
We introduce target versions of the volumetric controllability score (VCS) and average energy controllability score (AECS), two controllability-based centrality measures defined on designated target nodes via the output controllability Gramian. 
We establish existence and generic uniqueness of these scores for any system matrix $A$ and almost all time horizons.
We also show that target VCS/AECS can behave qualitatively differently from the standard VCS/AECS, because projection onto the target nodes changes the underlying Gramian structure.

\item
We develop a reduced virtual system for approximating the TCS. 
For this approximation, we derive explicit error bounds and show how the accuracy depends on the cross-coupling between target and non-target nodes and on the dynamical growth rate.

\item
We validate the proposed framework on human brain networks from 88 subjects. 
The experiments clarify complementary behaviors of target VCS and target AECS, including the stronger robustness of target AECS across time horizons and the higher short-term sensitivity of target VCS.
\end{enumerate}

The remainder of this paper is organized as follows.
Section~\ref{Sec_TCS} 
introduces the target VCS and target AECS, presents a computation method based on \cite{sato2022controllability}, and discusses their uniqueness for any matrix $A \in \mathbb{R}^{n\times n}$ as well as their qualitative differences from the standard VCS/AECS.
Section~\ref{sec:efficient} proposes approximation methods for the target VCS/AECS and provides an error analysis based on a reduced-order model.
Section~\ref{Sec_numerical} validates the proposed framework using human brain network data from 88 individuals and demonstrates a trade-off between short-term sensitivity of the target VCS and long-term robustness of the target AECS.
Finally, Section \ref{Sec_conclusion} concludes this paper.

{\it Notation:}
 The sets of real numbers and positive real numbers are denoted by ${\bb R}$ and ${\bb R}_{>0}$, respectively.
 For a matrix $X\in {\bb R}^{m\times n}$, $X^{\top}$ denotes the transpose of $X$.
 For a square matrix $A\in {\bb R}^{n\times n}$, $\det A$ and
${\rm tr}(A)$ denote the determinant and  diagonal sum of $A$, respectively.
For a symmetric matrix $A$,
$A\succeq O$ (resp. $A\succ O$) denotes that $A$ is positive semidefinite (resp. positive definite), and $A\preceq O$ (resp. $A\prec O$) denotes that $A$ is negative semidefinite 
(resp. negative definite).
More generally, for two symmetric matrices $A$ and $B$, 
we write $A\preceq B$ (resp. $A\succeq B$) if $B-A$ (resp. $A-B$) is positive semidefinite. 
This is the standard Loewner partial order.
For symmetric matrices $X$ and $Y$, 
 \begin{align*}
|X|\preceq Y\quad \text{if and only if} \quad -Y \preceq X \preceq Y.  
\end{align*}
The symbols of $\lambda_{\rm max}(X)$ and $\lambda_{\rm min}(X)$ are the largest and smallest eigenvalues of a symmetric matrix $X$, respectively.
The symbol $I$ denotes the identity matrix of appropriate size.
Given a vector $v=(v_i)\in {\bb R}^n$, $\|v\|$ and ${\rm diag}(v_1,\ldots, v_n)$ denote the usual Euclidean norm $\|v\|=\sqrt{v^{\top}v}$
and the diagonal matrix with the diagonal elements $v_1,\ldots, v_n$, respectively.
Instead of ${\rm diag}(v_1,\ldots, v_n)$, we also use ${\rm diag}(v)$.
For a matrix $A\in{\bb R}^{m\times n}$, we define its operator norm (or spectral norm) induced by the Euclidean norm as
$\|A\| := \sup_{x\in{\bb R}^n\setminus\{0\}}\frac{\|Ax\|}{\|x\|}$.
Equivalently, $\|A\|$ is the largest singular value of $A$.

\section{Target Controllability Score} \label{Sec_TCS}

As noted in Section~\ref{sec:intro}, in many practical intervention problems, interventions are available only at a limited subset of state nodes, rather than at all nodes of system \eqref{system0}. This situation commonly arises when intervention channels are localized or structurally constrained. In the present paper, we focus on the practically important case in which the nodes of direct interest coincide with this accessible subset. Although this is not the most general setting, it naturally arises when one can intervene only on a designated set of variables and the objective is precisely to influence those same variables, rather than the full state. In such cases, virtual system \eqref{eq:lti}, which assumes inputs at all state nodes, may not be appropriate for intervention-oriented decision making.

Accordingly, we consider the following target-oriented virtual system:
\begin{align}
\begin{cases}
  \dot x(t) = Ax(t) + B(p)u(t)\\
  y(t) =Cx(t)
  \end{cases}\label{eq:lti_virtual3}
\end{align}
where
\begin{align}
    B(p) &:= \begin{pmatrix}
\sqrt{p_1} & 0 & \dots & 0 \\
0& \sqrt{p_2} & \dots & 0 \\
\vdots & \vdots & \ddots & \vdots \\
0 & 0 & \dots & \sqrt{p_m} \\
0      & 0      & \dots & 0      \\
\vdots & \vdots &       & \vdots \\
0      & 0      & \dots & 0
\end{pmatrix}, \\
C &:=\begin{pmatrix}
  I_m & 0  
\end{pmatrix}
= \sum_{i=1}^m e_i^{(m)}\left(e_i^{(n)}\right)^\top\label{def_C}
\end{align}
with $p_i \ge 0$ denoting the intervention weight assigned to the $i$th accessible node, and $e_i^{(k)}$ denoting the $i$th standard basis vector in $\mathbb{R}^k$.
The output matrix $C$ specifies the target nodes: it serves as a selector that projects the full state vector $x(t)$ onto the subspace spanned by the designated targets. Consequently, the output 
$y(t)=Cx(t)$
 contains only the components of the state corresponding to the chosen targets, so that controllability is evaluated only with respect to this restricted subset, rather than the entire network.
Note that by a permutation of the state coordinates, we can assume without loss of generality that the designated target nodes correspond to the first $m$ coordinates, which yields the canonical form \eqref{eq:lti_virtual3}.

When the number of accessible input nodes $m$ is small, the overall controllability of the full state equation $\dot{x}(t)=Ax(t)+B(p)u(t)$ becomes extremely weak, and the system is practically uncontrollable \cite{baggio2023controllability, olshevsky2016eigenvalue, pasqualetti2014controllability, pasqualetti2019re, suweis2019brain,tu2018warnings}.
Therefore, instead of evaluating full-state controllability, we consider controllability only through the output $y(t)=Cx(t)$, which captures how the designated target nodes can be influenced by the available inputs.
The formal definition of this output controllability will be introduced in Section~\ref{Sec_output_con}.

\begin{remark}
A model for intervention-oriented decision making may involve memory effects of the form
$\dot{x}(t) = Ax(t) + \int_0^t K(t-\tau)x(\tau)\,{\rm d}\tau$.
Under an appropriate structural assumption on the kernel $K$, such a model can often be rewritten as a finite-dimensional linear system in the same form as \eqref{system0}, although its state dimension may be different from that of the original model.
In such a representation, the controllability of designated target variables can still be assessed through a suitable output equation.
Section~\ref{future_nonlinear} discusses how this viewpoint may also serve as a useful intermediate step toward intervention analysis of unknown nonlinear systems.
\end{remark}

\subsection{Output Controllability} \label{Sec_output_con}

Before introducing the output controllability Gramian used to define target VCS/AECS, we recall the notion of output controllability introduced in \cite{kreindler1964concepts}. 
System \eqref{eq:lti_virtual3} is said to be output controllable on $[0,T]$ if, for any initial state $x(0)$ and any desired terminal output $y_T$, there exists an input $u(\cdot)$ that steers the system so that $y(T)=y_T$ within the finite horizon $T$. 
In contrast, the usual notion of controllability requires that the entire state vector $x(t)$ can be driven to an arbitrary terminal state. 
Output controllability is therefore a weaker property, concerning only those components of the state observed through the output matrix $C$. 

As shown in \cite[Theorem I]{kreindler1964concepts}, system \eqref{eq:lti_virtual3} is output controllable on $[0,T]$ if and only if the output controllability Gramian
\begin{align}
    W(p,T) := C\widetilde{W}(p,T)C^\top\in \mathbb{R}^{m\times m} \label{def_output_con_Gra}
\end{align}
is nonsingular; equivalently, $\operatorname{rank} W(p,T) = m$. 
Here, $\widetilde{W}(p,T) \in \mathbb{R}^{n\times n}$ is the usual controllability Gramian of system \eqref{eq:lti_virtual3}.
In other words, $W(p,T)$ is the output Gramian obtained by projecting the state controllability Gramian $\widetilde{W}(p,T)$ through the output matrix $C$.
Moreover, $\widetilde{W}(p,T)$ is given by
\begin{align}
    \widetilde{W}(p,T) &= \int_0^T \exp(At) B(p)B(p)^\top \exp(A^\top t)\, {\rm d} t \\
    &= \sum_{i=1}^m p_i \widetilde{W}_i(T), \label{Def_Wc}
\end{align}
where, for each $i\in\{1,\dots,m\}$, $\widetilde{W}_i(T)$ denotes the state controllability Gramian associated with the $i$th target node (equivalently, the $i$th virtual input channel), that is,
\begin{align}
\label{eq:finite_time_horizon_gramian}
    \widetilde{W}_i(T) := \int_0^T \exp(At) e_i^{(n)} (e_i^{(n)})^\top \exp(A^\top t)\, \D t.
\end{align} 
From \eqref{def_output_con_Gra} and \eqref{Def_Wc}, we obtain
\begin{align}
    W(p,T) = \sum_{i=1}^m p_i W_i(T), \label{eq_W(p,T)}
\end{align}
where $W_i(T)$ denotes the corresponding output controllability Gramian, that is,
\begin{align}
    W_i(T) := C \widetilde{W}_i(T) C^\top. \label{output_con_Gra}
\end{align}

Equivalently, \cite[Theorem III]{kreindler1964concepts} shows that system \eqref{eq:lti_virtual3} is output controllable on $[0,T]$ if and only if the output controllability matrix
\begin{align}
    \begin{pmatrix}
        CB(p) & CA B(p) & \cdots & CA^{n-1}B(p)
    \end{pmatrix} \label{output_con_rank}
\end{align}
has full row rank $m$. 
Thus, the output controllability property of system \eqref{eq:lti_virtual3} is independent of $T$.
For this reason, we simply say that system \eqref{eq:lti_virtual3} is output controllable, without explicitly referring to the horizon $[0,T]$.

We record a basic property of the matrices $W_i(T)$.
This property will be used in Section~\ref{Sec_target_VCS/AECS} to show that each component of the gradients of our objective functions is strictly negative. This observation helps justify the interpretation of the optimal allocation, under the simplex constraint on $p$.

\begin{lemma} \label{Lem_Wi_nonzero}
For any $T>0$ and any $i\in\{1,\dots,m\}$, the matrix $W_i(T)$ in \eqref{output_con_Gra} satisfies $W_i(T)\succeq O$ and $W_i(T)\neq O$.
\end{lemma}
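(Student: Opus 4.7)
The plan is to expose $W_i(T)$ as an integral of rank-one positive semidefinite matrices and then exploit continuity at $t=0$ together with the crucial assumption $i\in\{1,\dots,m\}$.

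Define $g_i(t) := C\exp(At)e_i^{(n)} \in \R^m$. Substituting the definition \eqref{output_con_Gra} of $W_i(T)$ and pulling $C$ and $C^\top$ inside the integral (which is permissible since $C$ is constant), I would write
\begin{align}
W_i(T) \;=\; \int_0^T g_i(t)\, g_i(t)^\top \,\D t.
\end{align}
For the first claim, note that for every $v\in\R^m$ the integrand satisfies $v^\top g_i(t) g_i(t)^\top v = (g_i(t)^\top v)^2 \geq 0$, so integration preserves the sign and yields $v^\top W_i(T) v \geq 0$. This gives $W_i(T)\succeq O$ immediately.

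For the second claim, I would argue via the trace. Because $g_i(t)g_i(t)^\top$ is pointwise PSD,
\begin{align}
\tr\bigl(W_i(T)\bigr) \;=\; \int_0^T \bigl\|g_i(t)\bigr\|^2 \,\D t.
\end{align}
Here the hypothesis $i\in\{1,\dots,m\}$ is essential: from the explicit form \eqref{def_C} of $C$, one has $Ce_i^{(n)} = e_i^{(m)}$, hence $g_i(0) = e_i^{(m)}$ and $\|g_i(0)\|^2 = 1$. Since $t\mapsto \|g_i(t)\|^2$ is continuous (the matrix exponential is analytic), there exists $\delta>0$ with $\|g_i(t)\|^2 \geq 1/2$ on $[0,\delta]$. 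Consequently,
\begin{align}
\tr\bigl(W_i(T)\bigr) \;\geq\; \int_0^{\min(T,\delta)} \tfrac{1}{2}\,\D t \;=\; \tfrac{1}{2}\min(T,\delta) \;>\; 0,
\end{align}
which forces $W_i(T)\neq O$.

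I do not anticipate any real obstacle here; the argument is a direct computation. The only subtlety worth flagging is the role of the restriction $i\leq m$: without it one could have $Ce_i^{(n)} = 0$, and then the trace argument at $t=0$ would fail, so the statement genuinely uses the indexing convention fixed by the definition of $C$.
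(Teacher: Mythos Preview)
Your proof is correct and follows essentially the same approach as the paper: both write $W_i(T)$ as an integral of rank-one PSD matrices $g_i(t)g_i(t)^\top$, use $g_i(0)=e_i^{(m)}$ together with continuity to produce a neighborhood on which the integrand is bounded away from zero, and conclude via the trace. Your handling of the two cases via $\min(T,\delta)$ is slightly more compact than the paper's explicit case split, but the argument is otherwise identical.
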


\begin{proof}
From \eqref{output_con_Gra}, we have
$W_i(T)=\int_0^T v_i(t)\,v_i(t)^\top\,\D t$,
where
$v_i(t):=C \exp(At) e_i^{(n)}\in\mathbb{R}^m$.
Thus, $W_i(T)\succeq O$.  

Moreover,
since $C$ is defined in \eqref{def_C},
\begin{align}
v_i(0)=C \exp(A\cdot 0) e_i^{(n)}=C e_i^{(n)}=e_i^{(m)}\neq 0.
\end{align}
Because $v_i(t)$ is continuous in $t$, there exists $\delta>0$ such that $v_i(t)\neq 0$ for $t\in[0,\delta]$. 
If $T\geq \delta$, then
$\int_0^T \|v_i(t)\|^2\, \D t \geq \int_0^\delta \|v_i(t)\|^2\, \D t >0$.
If $T<\delta$, then $v_i(t)\neq 0$ for all $t\in[0,T]$, and hence
$\int_0^T \|v_i(t)\|^2\, \D t >0$.
In either case, we have $\int_0^T \|v_i(t)\|^2\, \D t>0$. 
Thus,
\begin{align*}
{\rm tr}(W_i(T))= \int_0^T {\rm tr}(v_i(t) v_i(t)^\top)\, \D t = \int_0^T \|v_i(t)\|^2\, \D t>0,     
\end{align*}
which implies $W_i(T)\neq O$. \qed
\end{proof}

\subsection{Geometric and Energy-Based Interpretation of Output Controllability} \label{subsec_geometric_energy}

In the classical setting, the controllability Gramian $\widetilde{W}(p,T)$ admits natural geometric and energy-based interpretations. 
Specifically, it characterizes the volume of the ellipsoid
\begin{align*}
    \widetilde{\mathcal{E}}(p,T) := \{x\in\Real^n \mid x^\top \widetilde{W}(p,T)^{-1} x \leq 1\},
\end{align*}
which represents the set of states that can be reached within unit control energy, 
and it quantifies the average input energy required to steer the state to points on the unit sphere in $\Real^n$. 

In the target setting, however, our concern is not the entire state $x(t)$ but only the designated target nodes specified by the output matrix $C$. 
Accordingly, the relevant object is the output controllability Gramian $W(p,T)$ defined in \eqref{def_output_con_Gra}. 
This Gramian determines the shape of the ellipsoid
\[
\mathcal{E}(p,T) := \{y\in\Real^m \mid y^\top W(p,T)^{-1} y \leq 1\},
\]
which represents the set of target outputs reachable within unit energy. 
The square root of $\det W(p,T)$ is proportional to the volume of $\mathcal{E}(p,T)$, and thus quantifies how widely the output can be driven with unit energy, analogously to \cite{muller1972analysis}.
Moreover, ${\rm tr}(W(p,T)^{-1})$ is proportional to the average of the minimum input energy $y_T^\top W(p,T)^{-1} y_T$ required to steer the output from the origin to $y_T$ uniformly distributed on the unit sphere in $\Real^m$.
Hence, $\det W(p,T)$ and ${\rm tr}(W(p,T)^{-1})$ respectively capture the volume of the reachable output set and the average input energy required to reach target outputs.

These geometric and energy-based viewpoints also motivate the choice of objective functions used below.
In particular, maximizing $\det W(p,T)$ corresponds to enlarging the reachable output ellipsoid $\mathcal{E}(p,T)$, whereas minimizing $\tr (W(p,T)^{-1})$ corresponds to reducing the average minimum input energy required to steer target outputs.

\subsection{Target VCS and Target AECS} \label{Sec_target_VCS/AECS}

For any positive number $T$, we define two convex sets on ${\bb R}^m$:
\begin{align}
    X_T &:= \{p\in {\bb R}^m \mid W(p,T)\succ O\}, \\
     \Delta_m & := \left\{(p_i) \in \Real^m \left|
    \begin{array}{l}
         \sum_{i=1}^m p_i = 1,    \\
         0 \leq p_i \quad (i=1,\dots,m)  
    \end{array}
    \right. \right\}. \label{eq:simplex}
\end{align}
The set $X_T$ is an open subset of $\mathbb{R}^m$, whereas $\Delta_m$ is a closed subset of $\mathbb{R}^m$.
The intersection \(X_T\cap \Delta_m\) represents the set of feasible intervention allocations over the target nodes.
More specifically, \(\Delta_m\) describes all admissible ways of distributing a unit amount of intervention resource among the \(m\) target nodes, whereas \(X_T\) guarantees that the corresponding virtual system \eqref{eq:lti_virtual3} remains output controllable (see Section \ref{Sec_output_con}).
Hence, \(X_T\cap \Delta_m\) is the set of all admissible resource allocations that are sufficiently rich to steer the target outputs in arbitrary directions in ${\bb R}^m$.
Note that \(X_T\cap\Delta_m\neq\emptyset\).
Indeed, the uniform point
    $p^{(0)}:=\left(1/m,\ldots,1/m\right)$
belongs to \(X_T\cap\Delta_m\), because the corresponding matrix in
\eqref{output_con_rank}
has full row rank $m$. 

On the feasible set $X_T\cap \Delta_m$,
we define the target VCS and target AECS through the following optimization problem for a given $T>0$:
\begin{framed}
\vspace{-1em}
 \begin{align}
\label{prob:unstable}
    \begin{aligned}
        &&& \text{minimize} && h_T(p) \\
        &&& \text{subject to} && p \in X_T\cap \Delta_m.
    \end{aligned}
\end{align}
\vspace{-1em}
\end{framed}

\noindent
Here, motivated by the geometric and energy-based interpretations in Section~\ref{subsec_geometric_energy}, $h_T$ denotes either $f_T$ or $g_T$ on $X_T$, defined as
\begin{align*}
f_T(p) := -\log\det W(p, T),\quad
    g_T(p) := \tr \paren{W(p, T)^{-1}}.
\end{align*}
The logarithm in $f_T(p)$ improves numerical stability, since $\det W(p,T)$---the product of the eigenvalues of $W(p,T)$---can become numerically close to zero when several eigenvalues are extremely small. The negative sign allows both $f_T(p)$ and $g_T(p)$ to be formulated as minimization problems in a unified way.

Since the intersection $X_T \cap \Delta_m$ is not closed in general, as demonstrated in \cite[Section~III-A]{sato2022controllability}, the existence of an optimal solution to problem \eqref{prob:unstable} is not immediate.
Nevertheless, it can be established as follows:
Define the sublevel set
\begin{align*}
\mathcal{F}_T^{(0)} 
:= \{p\in {\bb R}^m \mid h_T(p)\leq h_T(p^{(0)})\}\cap (X_T\cap \Delta_m),
\end{align*}
where $p^{(0)}$ is any fixed point in $X_T\cap\Delta_m$.
Arguing as in the proof of \cite[Lemma~1]{sato2022controllability}, we can regard \(\mathcal{F}_T^{(0)}\) as the sublevel set of \(h_T\) over \(\Delta_m\), namely,
\begin{align}
\mathcal{F}_T^{(0)} 
= \{p\in \Delta_m \mid h_T(p)\leq h_T(p^{(0)})\}. \label{eq_local_feasible}
\end{align}
Indeed, no sequence in \(\mathcal{F}_T^{(0)}\) can converge to a point in \(\Delta_m\setminus X_T\), since \(h_T(p)\to+\infty\) as \(W(p,T)\) approaches singularity.

Consequently, problem~\eqref{prob:unstable} is equivalent to
 \begin{align}
    \begin{aligned}
        &&& \text{minimize} && h_T(p) \\
        &&& \text{subject to} && p\in \mathcal{F}_T^{(0)}.
    \end{aligned} \label{problem2}
\end{align}
Since $\Delta_m$ is compact and $h_T$ is continuous on $X_T$, the set 
$\mathcal{F}_T^{(0)}$ is a nonempty compact subset of ${\bb R}^m$. 
Thus, by the Weierstrass' theorem, problem~\eqref{problem2} has an optimal solution (see \cite[Proposition~A.8]{bertsekas2016nonlinear}),
and
problem~\eqref{prob:unstable} also admits an optimal solution.

We define the target VCS as an optimal solution to problem \eqref{prob:unstable} with \(h_T=f_T\), and the target AECS as an optimal solution with \(h_T=g_T\).
The two proposed scores are defined on the same feasible set \(X_T\cap\Delta_m\) and are both based on the same output controllability Gramian \(W(p,T)\), but they use different optimality criteria.
From the perspective in Section~\ref{subsec_geometric_energy}, the target VCS and target AECS respectively capture the roles of target nodes in enlarging \(\mathcal{E}(p,T)\) and in reducing the control energy to reach the unit sphere in \(\Real^m\).
This perspective clarifies how target VCS and target AECS generalize the classical counterparts by restricting the analysis to a designated subset of nodes rather than all nodes.

If an optimal solution to problem \eqref{prob:unstable} is unique, the target VCS/AECS can serve as a centrality measure for network system \eqref{system0}, since larger values of $p_i$ indicate the greater importance of target node $x_i$ in output controllability. 
Indeed, the gradients of $f_T(p)$ and $g_T(p)$ are given by 
\begin{align}
(\nabla f_T(p))_i &= -\tr(W(p,T)^{-1}W_i(T)), \\
(\nabla g_T(p))_i &= -\tr(W(p,T)^{-1}W_i(T)W(p,T)^{-1})
\end{align}
for $i=1,\dots,m$.
By Lemma~\ref{Lem_Wi_nonzero}, we have $W_i(T)\succeq O$ and $W_i(T)\neq O$ for every $i$, and therefore
\begin{align}
(\nabla f_T(p))_i &= -\tr(W(p,T)^{-1/2} W_i(T) W(p,T)^{-1/2}) < 0, \\
(\nabla g_T(p))_i &< 0
\end{align}
for any $p\in X_T$.
This shows that, under the simplex constraint $p\in\Delta_m$, increasing $p_i$ tends to enlarge the ellipsoid $\mathcal{E}(p,T)$ and reduce the average energy required for output steering.


Target VCS/AECS can be calculated using Algorithm~\ref{alg:projgrad}, which is a modification from the method proposed in \cite{sato2022controllability}. 
There are two key differences: 
\begin{itemize}
    \item While \cite{sato2022controllability} considered the case $m=n$, Algorithm~\ref{alg:projgrad} allows a general number of target nodes $m\leq n$.
    \item Algorithm~\ref{alg:projgrad} is based on the output controllability Gramians \eqref{output_con_Gra}, whereas \cite{sato2022controllability} used the standard controllability Gramians \eqref{eq:finite_time_horizon_gramian}. 
\end{itemize}

In Algorithm~\ref{alg:projgrad}, $\Pi_{\Delta_m}$ in step~3 denotes the projection onto the standard simplex $\Delta_m$ in \eqref{eq:simplex}, which can be computed efficiently as detailed in \cite{condat2016fast}. 
Here, the projection is taken only onto $\Delta_m$, not onto $X_T$, because $X_T$ is an open set in ${\bb R}^m$ and thus a nearest-point projection onto $X_T$ is not guaranteed to exist uniquely for all points.
Moreover, since the Armijo backtracking procedure in Algorithm~\ref{alg:Armijo} guarantees
    $h_T(p^{(k+1)}) \le h_T(p^{(k)})$,
all iterates remain in the initial sublevel set $\mathcal{F}_T^{(0)}$ in \eqref{eq_local_feasible}.
Therefore, along the generated sequence, the projection onto $\Delta_m$ coincides with that onto the compact convex set $\mathcal{F}_T^{(0)}$.
Hence, the convergence proof follows in the same way as in \cite[Theorem~6]{sato2022controllability}.

\begin{proposition}
Let $\{p^{(k)}\}$ be a sequence generated by Algorithm~\ref{alg:projgrad} with $\varepsilon=0$. 
Then,
there exists an optimal solution $p^*$ to problem \eqref{prob:unstable} such that
    \begin{align*}
        \lim_{k\rightarrow \infty} p^{(k)} = p^*.
    \end{align*}
\end{proposition}

We will show in Section~\ref{sec:uniqueness} that problem~\eqref{prob:unstable} admits a unique optimal solution.
Therefore, when $h_T=f_T$, the sequence $\{p^{(k)}\}$ converges to the uniquely determined target VCS, and when $h_T=g_T$, it converges to the uniquely determined target AECS.

\begin{figure}[!t]
\begin{algorithm}[H]
    \caption{A projected gradient method}
    \label{alg:projgrad}
  \textbf{Input:} Output Controllability Gramians $W_1(T),\ldots, W_m(T)$ in \eqref{output_con_Gra}, $p^{(0)} := (1/m,\ldots, 1/m) \in X_T \cap \Delta_m$, and $\varepsilon\geq 0$.\\
 \textbf{Output:} {target VCS/AECS.}
    \begin{algorithmic}[1]
    \FOR{$k=0,1,\ldots$}
    \STATE $q^{(k)} := p^{(k)}- \alpha^{(k)} \nabla h_T(p^{(k)})$, where $\alpha^{(k)}$ is defined by using Algorithm \ref{alg:Armijo}. 
    \STATE $p^{(k+1)} := \Pi_{\Delta_m}(q^{(k)})$.

        \IF{$\|p^{(k)}-p^{(k+1)}\|\leq \varepsilon$}
        \RETURN $p^{(k+1)}$.
        \ENDIF
    \ENDFOR
    \end{algorithmic}
\end{algorithm}
\end{figure}

\begin{figure}[!t]
\begin{algorithm}[H]
    \caption{Armijo rule along the projection arc}
    \label{alg:Armijo}
    \textbf{Input:} $\sigma,\,\rho\in (0,1)$ and $\alpha>0$.\\
    \textbf{Output:} {Step size.}
    \begin{algorithmic}[1]
    \STATE $\tilde{p}^{(k)}:=\Pi_{\Delta_m}(p^{(k)}-\alpha \nabla h_T(p^{(k)}))$.
    \IF{$h_T(\tilde{p}^{(k)})\leq h_T(p^{(k)}) + \sigma \nabla h_T(p^{(k)})^\top (\tilde{p}^{(k)}-p^{(k)})$}
        \RETURN $\alpha^{(k)}:=\alpha$.
    \ELSE
    \STATE $\alpha\leftarrow \rho \alpha$, and go back to step 1.
        \ENDIF
    \end{algorithmic}
\end{algorithm}
\end{figure}

\begin{remark}
 According to \cite[Theorem~6]{sato2025uniqueness}, Algorithm~\ref{alg:projgrad} is guaranteed to converge linearly to the optimal solution of problem \eqref{prob:unstable} under mild assumptions. Using an argument analogous to that in \cite[Section~III-C]{sato2025uniqueness}, it can be shown that for sufficiently small $T>0$, the optimal solution to problem \eqref{prob:unstable} is approximately $(1/m,\ldots,1/m)$.
\end{remark}

\subsection{Uniqueness of Target Controllability Score} \label{sec:uniqueness}

We now turn to the question of the uniqueness of the target VCS/AECS, that is, an optimal solution to problem~\eqref{prob:unstable}.
Such uniqueness is fundamental for ensuring interpretability, comparability, and reproducibility when target VCS/AECS is employed as a centrality measure for target nodes.

To this end, we use the Hessians
of $f_T(p)$ and $g_T(p)$, which are expressed as
  \begin{align}
    &(\nabla^2 f_T(p))_{ij} \nonumber\\
    &= \tr (W(p,T)^{-1} W_i(T)W(p,T)^{-1}W_j(T)), \label{Hessian_f}\\
     & (\nabla^2 g_T(p))_{ij} \label{Hessian_g} \\
     &= \tr (W(p,T)^{-1} W_i(T)W(p,T)^{-1}W_j(T)W(p,T)^{-1}) \nonumber\\
      &\,\,\,\, +\tr (W(p,T)^{-1} W_j(T)W(p,T)^{-1}W_i(T)W(p,T)^{-1}), \nonumber
  \end{align}
respectively.
For the uniqueness analysis, it is useful to rewrite these Hessians in quadratic form. 
The following lemma provides such a representation.

\begin{lemma} \label{Lem_nijikeishiki}
Suppose that $T>0$ and $p\in X_T$ are given. Then,
    for any $x\in {\bb R}^m$,
    \begin{align}
    x^\top \nabla^2 f_T(p) x  
    &= {\rm tr} \left(G(p,x,T)^2 \right), \label{f_nijikeishiki}\\
    x^\top \nabla^2 g_T(p) x  
    &= 2{\rm tr}(W(p,T)^{-1} G(p,x,T)^2), \label{g_nijikeishiki}
    \end{align}
 where 
    \begin{align}
        G(p,x,T):= W(p,T)^{-1/2}W(x,T)W(p,T)^{-1/2}.
    \end{align}
\end{lemma}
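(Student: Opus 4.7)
The plan is to expand each quadratic form $x^\top \nabla^2 h_T(p) x$ using the coordinate-wise Hessian expressions \eqref{Hessian_f} and \eqref{Hessian_g}, then exploit bilinearity to collapse the sums $\sum_i x_i W_i(T)$ into $W(x,T)$, and finally use the cyclic property of the trace together with the factorization $W(p,T)^{-1} = W(p,T)^{-1/2} W(p,T)^{-1/2}$ to bring the identity into the form involving $G(p,x,T)$. Throughout, I write $W = W(p,T)$ for brevity.

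For the identity \eqref{f_nijikeishiki}, I would start from
\begin{align*}
x^\top \nabla^2 f_T(p) x
&= \sum_{i,j=1}^m x_i x_j\, \tr\!\bigl(W^{-1} W_i(T) W^{-1} W_j(T)\bigr) \\
&= \tr\!\Bigl(W^{-1} \bigl(\textstyle\sum_i x_i W_i(T)\bigr) W^{-1} \bigl(\textstyle\sum_j x_j W_j(T)\bigr)\Bigr) \\
&= \tr\!\bigl(W^{-1} W(x,T) W^{-1} W(x,T)\bigr),
\end{align*}
where the last step uses the linearity $W(x,T) = \sum_i x_i W_i(T)$. Writing $W^{-1} = W^{-1/2} W^{-1/2}$ and applying the cyclic property of the trace, this equals $\tr\!\bigl(W^{-1/2} W(x,T) W^{-1} W(x,T) W^{-1/2}\bigr) = \tr(G(p,x,T)^2)$, which is \eqref{f_nijikeishiki}.

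For the identity \eqref{g_nijikeishiki}, I would note that the two trace terms in \eqref{Hessian_g} become equal after multiplying by $x_i x_j$ and summing, because swapping the dummy indices $i \leftrightarrow j$ and then applying cyclicity shows that
\begin{align*}
\sum_{i,j} x_i x_j \tr\!\bigl(W^{-1} W_j(T) W^{-1} W_i(T) W^{-1}\bigr)
= \sum_{i,j} x_i x_j \tr\!\bigl(W^{-1} W_i(T) W^{-1} W_j(T) W^{-1}\bigr).
\end{align*}
Collecting the two contributions and using bilinearity gives
\begin{align*}
x^\top \nabla^2 g_T(p) x = 2\,\tr\!\bigl(W^{-1} W(x,T) W^{-1} W(x,T) W^{-1}\bigr).
\end{align*}
By the cyclic property, the right-hand side equals $2\tr\!\bigl(W^{-1} W^{-1/2} W(x,T) W^{-1} W(x,T) W^{-1/2}\bigr) = 2\tr(W^{-1} G(p,x,T)^2)$, which is \eqref{g_nijikeishiki}.

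There is no genuine obstacle here; the proof is essentially symbolic bookkeeping. The only point that requires a small amount of care is recognizing, for \eqref{g_nijikeishiki}, that the two seemingly distinct traces appearing in \eqref{Hessian_g} coincide after the double summation against $x_i x_j$, so that the factor of $2$ emerges cleanly. \qed
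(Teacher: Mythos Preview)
Your proof is correct and follows essentially the same approach as the paper: expand the quadratic form via the Hessian entries \eqref{Hessian_f} and \eqref{Hessian_g}, collapse the sums into $W(x,T)$, and then recognize the resulting trace expressions as $\tr(G^2)$ and $2\tr(W^{-1}G^2)$, respectively. You have spelled out the cyclic-trace and square-root manipulations more explicitly than the paper does, but the argument is identical.
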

\begin{proof}
    It follows from \eqref{Hessian_f} that for any $x\in {\bb R}^m$,
    \begin{align*}
    x^\top \nabla^2 f_T(p) x &= \sum_{i,j=1}^m x_i (\nabla^2 f_T(p))_{ij} x_j \\
    &= {\rm tr} (W(p,T)^{-1} W(x,T) W(p,T)^{-1} W(x,T)).
    \end{align*}
    Thus, \eqref{f_nijikeishiki} holds.
    Similarly, it follows from \eqref{Hessian_g} that for any $x\in {\bb R}^m$,
    \begin{align*}
    &x^\top \nabla^2 g_T(p) x = \sum_{i,j=1}^m x_i (\nabla^2 g_T(p))_{ij} x_j \\
    =& 2{\rm tr} (W(p,T)^{-1} W(x,T) W(p,T)^{-1} W(x,T) W(p,T)^{-1}).
    \end{align*}
    Thus, \eqref{g_nijikeishiki} holds. \qed
\end{proof}

Lemma~\ref{Lem_nijikeishiki} yields
the following result, 
which is pivotal for proving
the uniqueness of the target VCS and AECS.
Although its proof closely follows the arguments of 
\cite[Lemma~2 and Theorem~1]{sato2022controllability},
we include it here for completeness.

\begin{lemma} \label{lem:sufficient condition of uniqueness}
    Let $T>0$ be arbitrary. If 
    \begin{align}
    W(x,T)=O\quad \Rightarrow\quad x=0, \label{unique_sufficient}
    \end{align}
     then an optimal solution to problem \eqref{prob:unstable} is unique.
\end{lemma}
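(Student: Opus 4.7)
The plan is to establish uniqueness by proving that, under hypothesis \eqref{unique_sufficient}, the objective $h_T$ is strictly convex on the convex feasible set $X_T \cap \Delta_m$, and then invoke the standard argument: if two distinct minimizers $p^\star$ and $\tilde p$ existed, the midpoint would remain feasible (by convexity of $X_T$, which is the preimage of the open cone of positive definite matrices under the linear map $p\mapsto W(p,T)=\sum_i p_i W_i(T)$, together with convexity of $\Delta_m$), and strict convexity would produce a strictly smaller objective value, contradicting optimality.

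The key step is therefore to show that the Hessian $\nabla^2 h_T(p)$ is positive definite at every $p\in X_T$. I will apply the quadratic-form identities from Lemma \ref{Lem_nijikeishiki}. For $h_T=f_T$, symmetry of $G(p,x,T)$ gives $x^\top \nabla^2 f_T(p)\, x = \tr(G(p,x,T)^2) = \|G(p,x,T)\|_F^2 \geq 0$. For $h_T=g_T$, a brief rearrangement using the cyclic property of the trace yields $x^\top \nabla^2 g_T(p)\, x = 2\tr(W(p,T)^{-1} G(p,x,T)^2) = 2\|W(p,T)^{-1/2} G(p,x,T)\|_F^2 \geq 0$. In either case, equality forces $G(p,x,T) = W(p,T)^{-1/2} W(x,T) W(p,T)^{-1/2} = O$. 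Since $W(p,T)^{-1/2}$ is invertible for $p\in X_T$, this is equivalent to $W(x,T)=O$, whereupon hypothesis \eqref{unique_sufficient} forces $x=0$. Hence $\nabla^2 h_T(p)\succ O$, so $h_T$ is strictly convex on $X_T$ and, a fortiori, on $X_T\cap\Delta_m$.

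I do not anticipate a serious obstacle: Lemma \ref{Lem_nijikeishiki} already delivers the crucial quadratic-form representations, and the argument reduces to observing that each Hessian quadratic form is a squared Frobenius norm that vanishes only when the symmetric factor $G(p,x,T)$ is zero. The only care points are to handle $f_T$ and $g_T$ uniformly, and to justify the passage $G(p,x,T)=O \Rightarrow W(x,T)=O$ via the invertibility of $W(p,T)^{-1/2}$, after which hypothesis \eqref{unique_sufficient} yields $x=0$ and hence the desired strict convexity.
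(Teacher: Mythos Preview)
Your proposal is correct and follows essentially the same approach as the paper: both invoke Lemma~\ref{Lem_nijikeishiki} to express the Hessian quadratic forms, show they vanish only when $G(p,x,T)=O$, deduce $W(x,T)=O$ via invertibility of $W(p,T)^{-1/2}$, and conclude strict convexity (hence uniqueness) from hypothesis~\eqref{unique_sufficient}. Your treatment of the $g_T$ case is in fact slightly more streamlined than the paper's, since you pass directly from $\|W(p,T)^{-1/2}G(p,x,T)\|_F=0$ to $G(p,x,T)=O$, whereas the paper routes through $G(p,x,T)^2=O$ and an auxiliary factorization.
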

\begin{proof}
    Since $X_T\cap \Delta_m$ is convex,
    the uniqueness of
    an optimal solution to
    problem \eqref{prob:unstable} follows if
    $h_T(p)$ is strictly convex on $X_T$, as shown in \cite[Proposition 1.1.2]{bertsekas2016nonlinear}.

    First, let $h_T(p):=f_T(p)$. From Lemma~\ref{Lem_nijikeishiki},
    $x^\top \nabla^2 f_T(p)x=0$ implies that $G(p,x,T)=O$, meaning that $W(x,T)=O$. 
    By assumption \eqref{unique_sufficient},
we then have $x=0$. This means that $f_T(p)$ is strictly convex on $X_T$.

Next, let $h_T(p):=g_T(p)$. From Lemma~\ref{Lem_nijikeishiki},
    $x^\top \nabla^2 g_T(p)x=0$ implies that $G(p,x,T)^2=O$, meaning that 
    \begin{align*}
        &  W(x,T)W(p,T)^{-1}W(x,T)=O \\ &\Leftrightarrow  V(p,x,T)^\top V(p,x,T)=O,
    \end{align*} 
    where $V(p,x,T):= W(p,T)^{-1/2}W(x,T)$.
    Thus, $x^\top \nabla^2 g_T(p)x=0$ implies that $V(p,x,T)=O$, which yields $W(x,T)=O$,
    and hence $x=0$ by assumption \eqref{unique_sufficient}. Therefore, $g_T(p)$ is strictly convex on $X_T$.
\qed    
\end{proof}

Since $W(p,T)$ is given in \eqref{eq_W(p,T)},
assumption \eqref{unique_sufficient} is equivalent to requiring that
the output controllability Gramians $W_1(T),\ldots, W_m(T)$ in \eqref{output_con_Gra} are linearly independent over ${\bb R}$.

Using Lemma \ref{lem:sufficient condition of uniqueness}, we can derive the following theorem.
Although the proof parallels that of \cite[Theorem 1]{sato2025uniqueness}, it replaces the controllability Gramian \eqref{Def_Wc} used there with
the output controllability Gramian \eqref{def_output_con_Gra},
and the argument has been adjusted accordingly.

\begin{theorem} \label{thm:unique_anyA}
For all $A\in \Real ^{n\times n}$ and almost all $T>0$, there exists a unique solution to
   problem \eqref{prob:unstable}.
\end{theorem}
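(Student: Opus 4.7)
The plan is to apply Lemma \ref{lem:sufficient condition of uniqueness} and show that the sufficient condition \eqref{unique_sufficient} fails only on a Lebesgue-null set of time horizons. By the remark immediately following Lemma \ref{lem:sufficient condition of uniqueness}, condition \eqref{unique_sufficient} is equivalent to the matrices $W_1(T),\ldots, W_m(T)$ being linearly independent over $\mathbb{R}$. My strategy is to detect this linear independence via a single real-analytic scalar function of $T$ that I can evaluate in the small-$T$ regime, thereby certifying it is not identically zero.

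First, I would introduce the Gram-like matrix $G(T) \in \mathbb{R}^{m\times m}$ with entries $G(T)_{ij} := \tr(W_i(T) W_j(T))$, i.e., the Gram matrix of $(W_1(T),\ldots,W_m(T))$ under the Frobenius inner product on symmetric $m \times m$ matrices. Then $W_1(T),\ldots,W_m(T)$ are linearly independent if and only if $\det G(T) \neq 0$. Because each $W_i(T)$ depends real-analytically on $T$ through the matrix exponential in \eqref{eq:finite_time_horizon_gramian}, the map $T \mapsto \det G(T)$ is real-analytic on $(0,\infty)$.

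Next, I would compute the leading-order behavior of $W_i(T)$ as $T\to 0^+$. Setting $v_i(t) := C \exp(At) e_i^{(n)}$, we have $W_i(T) = \int_0^T v_i(t) v_i(t)^\top \,\mathrm{d}t$, and the canonical form $C = (I_m\ 0)$ from \eqref{def_C} gives $v_i(0) = e_i^{(m)}$. A Taylor expansion therefore yields $W_i(T) = T\, e_i^{(m)} (e_i^{(m)})^\top + O(T^2)$. Since the rank-one matrices $e_i^{(m)}(e_i^{(m)})^\top$ for $i = 1,\ldots, m$ are mutually orthogonal under the Frobenius inner product, this gives $G(T) = T^2 I_m + O(T^3)$, hence $\det G(T) = T^{2m} + O(T^{2m+1})$, which is strictly positive for all sufficiently small $T>0$.

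Since $\det G(T)$ is a real-analytic function on $(0,\infty)$ that does not vanish identically, its zero set is discrete and in particular has Lebesgue measure zero. For every $T$ in its complement, $W_1(T),\ldots,W_m(T)$ are linearly independent, so Lemma \ref{lem:sufficient condition of uniqueness} delivers a unique optimal solution to problem \eqref{prob:unstable}. Crucially, the small-$T$ computation relies only on the canonical form $C = (I_m\ 0)$ and is insensitive to $A$, so the argument applies uniformly over all $A \in \mathbb{R}^{n\times n}$. The main subtlety I anticipate is ensuring that the linear-dependence condition is handled simultaneously over the uncountable family of candidate directions $x \in \mathbb{R}^m$; this is precisely what the Gram-matrix reformulation accomplishes, by packaging "no nontrivial linear dependence among the $W_i(T)$" into the single scalar analytic condition $\det G(T) \neq 0$.
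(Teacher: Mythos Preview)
Your proof is correct and follows the same high-level strategy as the paper: reduce via Lemma~\ref{lem:sufficient condition of uniqueness} to linear independence of $W_1(T),\ldots,W_m(T)$, encode this as nonvanishing of the determinant of an $m\times m$ real-analytic matrix in $T$, and verify nonvanishing through the leading small-$T$ term, which is forced by the canonical form $C=(I_m\ 0)$ independently of $A$.

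The difference lies in the choice of detector matrix. The paper extracts only the diagonal entries of the equation $W(x,T)=O$ to obtain $R(T)x=0$ with $R(T)_{ij}=\int_0^T P_{ij}(t)^2\,\mathrm{d}t$, checks $R(0)=0$ and $R'(0)=I$, and then invokes the argument of \cite[Theorem~1]{sato2025uniqueness} to conclude that $\det R(T)$ vanishes only on a null set. Your route instead uses the Frobenius Gram matrix $G(T)_{ij}=\tr(W_i(T)W_j(T))$, whose nonsingularity is \emph{equivalent} to (not merely sufficient for) linear independence, and computes the leading term $\det G(T)=T^{2m}+O(T^{2m+1})$ directly. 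Your argument is thus self-contained (no appeal to the cited external proof) and uses the canonical linear-algebra certificate for independence; the paper's $R(T)$ is a slightly more ad hoc but equally effective surrogate, with the minor advantage that its entries are single integrals rather than products of integrals.
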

\begin{proof}
From Lemma \ref{lem:sufficient condition of uniqueness}, it is sufficient to show that for almost all $T>0$ and all $x=(x_i) \in \Real^m$, $W(x, T) = O$ yields $x=0$.
Thus, we assume $W(x, T) = O$.
Note that $W_i(T)$ can be expressed as
    \begin{align*}
    W_i(T) = \int _0^T P(t) e_i^{(n)} (e_i^{(n)})^\top P(t)^\top \D t
    \end{align*}
with $P(t) := C\exp (At)$.
For $i = 1,2,\ldots,m$, the $(i,i)$-th component of $W(x,T)$ is obtained as
\begin{align}
    \left(W(x,T)\right)_{ii}
    &= (e_i^{(m)})^\top W(x,T)e_i^{(m)}\\
    &= \sum_{j=1}^m x_j \cdot \int_0^T \left( (e_i^{(m)})^\top P(t) e_j^{(n)} \right)^2\D t\\
    &= \sum_{j=1}^m x_j \cdot \int_0^T  P_{ij}(t)^2\D t. \label{eq:ii-th component}
\end{align}
Eq. \eqref{eq:ii-th component} implies that $W(x,T)=O$ yields
\begin{align}
R(T)x=0, \label{eq:R(T)x = 0}
\end{align}
where
\begin{align}
    R(T) := \int_0^T \begin{pmatrix}
    P_{11}(t)^2 & P_{12}(t)^2 &\cdots& P_{1m}(t)^2 \\
    P_{21}(t)^2 & P_{22}(t)^2 &\cdots & P_{2m}(t)^2\\
    \vdots & \vdots & \ddots & \vdots\\
    P_{m1}(t)^2 & P_{m2}(t)^2 & \cdots & P_{mm}(t)^2
    \end{pmatrix}
    \D t.
\end{align}

If $\det R(T) \neq 0$, \eqref{eq:R(T)x = 0} implies $x=0$.
Thus, it suffices to show that $\det R(T) \neq 0$ for almost all $T>0$.
Note that $R(0)=0$ and
\begin{align*}
    &\dfrac{\D R}{\D T} (0) \\
    =& \begin{pmatrix}
    (e_1^{(m)})^\top C e_1^{(n)} & (e_1^{(m)})^\top C e_2^{(n)} &\cdots& (e_1^{(m)})^\top C e_m^{(n)} \\
    (e_2^{(m)})^\top C e_1^{(n)} & (e_2^{(m)})^\top C e_2^{(n)} &\cdots& (e_2^{(m)})^\top C e_m^{(n)} \\
    \vdots & \vdots & \ddots & \vdots\\
    (e_m^{(m)})^\top C e_1^{(n)} & (e_m^{(m)})^\top C e_2^{(n)} &\cdots& (e_m^{(m)})^\top C e_m^{(n)}
    \end{pmatrix}\\
    =& I,
\end{align*}
where the second equality follows from \eqref{def_C}.
Therefore, the fact that
$\det R(T) \neq 0$ for almost all $T>0$ follows by exactly the same argument as in the proof of \cite[Theorem 1]{sato2025uniqueness}. \qed
\end{proof}

By Theorem \ref{thm:unique_anyA}, target VCS/AECS can be used as centrality measures for target nodes. 

Note that we cannot replace ``almost all $T$" in Theorem \ref{thm:unique_anyA} with ``all $T$",
because there exists an example in which problem \eqref{prob:unstable} admits multiple optimal solutions, as shown in \cite[Section IV]{sato2022controllability}.
Hence, from a practical viewpoint, the control horizon $T$ does not need to be chosen excessively carefully, since uniqueness is typically recovered by a small perturbation of $T$.
Moreover, for some important classes of systems, stronger uniqueness results for all $T>0$ are available, as in \cite[Appendix~E]{umezu2026infinite}.

\subsection{Difference from Standard VCS/AECS} \label{sec:difference_standard_target}

Although Theorem~\ref{thm:unique_anyA} ensures that target VCS/AECS is well-defined for almost all $T>0$,
its qualitative behavior can be markedly different from that of the standard VCS/AECS.
In particular, symmetry-based conclusions known for the standard setting do not directly carry over to the target setting:
When $A$ is symmetric, the standard VCS becomes uniform, and when $A$ is skew-symmetric, both the standard VCS and AECS become uniform, as shown in \cite[Section~III-B]{sato2025uniqueness}.
These properties rely heavily on the fact that the full-state controllability Gramian \eqref{Def_Wc} is used.

By contrast, in the target setting considered in this paper, we evaluate \eqref{eq_W(p,T)},
where each $W_i(T)$ is an output controllability Gramian associated with the target nodes.
Hence, even if $A$ is symmetric or skew-symmetric, projection onto the target nodes may break the node-wise symmetry that leads to uniform optimal solutions in the standard full-state setting.
Therefore, one should not expect the target VCS/AECS to inherit the same conclusions.

The following examples illustrate this point.

\begin{example}[A symmetric matrix for which target VCS is not uniform]
Let $n=3$, $m=2$, $T=1$, and
    $C = \begin{pmatrix}
        1&0&0\\
        0&1&0
    \end{pmatrix}$.
Consider
    $A=
    \begin{pmatrix}
        0&2&2\\
        2&0&1\\
        2&1&0
    \end{pmatrix}$,
which is symmetric.
The resulting target VCS is
\begin{align}
    p^{\rm VCS}\approx (0.3999,\,0.6001),
\end{align}
which is not the uniform allocation $(1/2,1/2)$.
Thus, even when $A$ is symmetric, the target VCS need not be uniform.
\end{example}

\begin{example}[A skew-symmetric matrix for which target VCS/AECS are not uniform]
Let $n=3$, $m=2$, $T=1$, and
    $C = \begin{pmatrix}
        1&0&0\\
        0&1&0
    \end{pmatrix}$.
Consider
    $A=
    \begin{pmatrix}
        0&-3&0\\
        3&0&-1\\
        0&1&0
    \end{pmatrix}$,
which is skew-symmetric.
The resulting target VCS $p^{\rm VCS}$ and target AECS $p^{\rm AECS}$
are
\begin{align}
    p^{\rm VCS}\approx (0, 1),
    \qquad
    p^{\rm AECS}\approx (0, 1),
\end{align}
which are far from the uniform allocation $(1/2,1/2)$.
Therefore, unlike the standard setting, skew-symmetry of $A$ does not imply uniformity of the target VCS/AECS.
\end{example}

The above examples show that the target controllability score is not merely a straightforward restriction of the standard controllability score to a subset of nodes.
In the target setting, projection onto the target nodes changes the structure of the Gramian, and this can fundamentally alter the optimal allocation.
Therefore, the qualitative behavior of target VCS/AECS should be analyzed separately from that of the standard VCS/AECS.

At the same time, however, this distinction does not rule out the possibility that a suitably constructed standard controllability score may still provide a useful approximation to the target one from a computational viewpoint.
This motivates us to investigate an approximation framework based on a reduced virtual system of dimension $m$, which avoids the explicit computation of the full $n\times n$ controllability Gramians.

\section{Approximate computation of target VCS/AECS}
\label{sec:efficient}

Although $W_i(T)$ in \eqref{output_con_Gra} is of size $m\times m$, computing it requires first obtaining $\widetilde{W}_i(T)$ in \eqref{eq:finite_time_horizon_gramian}, which is an $n\times n$ matrix.
Thus, in practice, one still needs to compute and store large matrices when $n \gg m$. 
As a result, for large-scale systems,
solving optimization problem \eqref{prob:unstable} to obtain the target VCS/AECS becomes computationally intractable.
This motivates the development of approximate methods that can efficiently estimate these scores without explicitly forming large-scale Gramians.

To reduce the computational cost, we introduce a reduced virtual system that retains only the target-state component.
Since the output matrix $C$ in \eqref{def_C} selects the first $m$ state variables, we partition the state vector as
$x(t)=\begin{pmatrix}x_{\rm red}(t)\\ x_{\rm nt}(t)\end{pmatrix}$,
where $x_{\rm red}(t)\in\mathbb{R}^m$ collects the target states and $x_{\rm nt}(t)\in\mathbb{R}^{n-m}$ collects the non-target states.
Accordingly, we partition the system matrix $A$ as
\begin{align}
  A = \begin{pmatrix} 
  A_{11} & A_{12} \\ 
  A_{21} & A_{22} 
  \end{pmatrix},
  \qquad
  A_{11}=CAC^\top\in {\bb R}^{m\times m}. \label{eq_A_partition}
\end{align}
Neglecting the influence of the non-target component $x_{\rm nt}(t)$ on the target dynamics, we obtain the reduced virtual system
\begin{align}
    \dot{x}_{\rm red}(t) = A_{11} x_{\rm red}(t) + {\rm diag}(\sqrt{p_1},\ldots, \sqrt{p_m}) u(t),
 \label{eq:lti_virtual4}
\end{align}
which evolves only on the target subspace.

Reduced virtual system \eqref{eq:lti_virtual4} describes the dynamics projected onto the first 
$m$ coordinates, where the input matrix is diagonal and each virtual input 
$u_i$
 acts directly on the corresponding reduced state 
$x_{{\rm red},i}$. 
Thus, the system contains exactly $m$ state nodes, and
the corresponding controllability Gramian $W_{\rm red}(p, T)$ is given by
\begin{align}
    W_{\rm red}(p,T) &= \sum_{i=1}^m p_i W_{i,{\rm red}}(T), \label{eq_con_Gra_red}\\
W_{i,{\rm red}}(T) &:= \int_0^T \exp(A_{11}t)e_i^{(m)}(e_i^{(m)})^\top \exp(A_{11}^\top t) \D t.
\end{align}

The standard CS \cite{sato2022controllability, sato2025uniqueness} is an optimal solution to the following optimization problem for a given $T>0$:

\begin{framed}
\vspace{-1em}
 \begin{align}
\label{prob:unstable2}
    \begin{aligned}
        &&& \text{minimize} && h_T^{\rm red}(p) \\
        &&& \text{subject to} && p \in X_T^{\rm red}\cap \Delta_m.
    \end{aligned}
\end{align}
\vspace{-1em}
\end{framed}

\noindent
Here,
\begin{align*}
    X_T^{\rm red} := \{p\in {\bb R}^m \mid W_{\rm red}(p,T)\succ O\},
\end{align*}
and
$h_T^{\rm red}(p)$ denotes either $f_T^{\rm red}(p)$ or $g_T^{\rm red}(p)$ on $X_T^{\rm red}$, defined as
$f_T^{\rm red}(p) := -\log\det W_{\rm red}(p, T)$   and  $g_T^{\rm red}(p) := \tr \paren{W_{\rm red}(p, T)^{-1}}$.

The VCS/AECS, i.e., the optimal solution to problem \eqref{prob:unstable2},
can be obtained using Algorithm~\eqref{alg:projgrad} by replacing the output controllability Gramians $W_1(T),\ldots, W_m(T)$ with $W_{1,{\rm red}}(T),\ldots, W_{m,{\rm red}}(T)$.
Since problem \eqref{prob:unstable2} concerns the standard controllability score for the reduced system 
\begin{align}
    \dot{x}_{\rm red}(t) = A_{11} x_{\rm red}(t), \label{eq_red_autonomous}
\end{align}
 \cite[Theorem~1]{sato2025uniqueness} implies that for all $A_{11}\in {\bb R}^{m\times m}$ and for almost all $T>0$, problem \eqref{prob:unstable2} admits a unique solution.
This reduced problem is much cheaper to solve than problem \eqref{prob:unstable}, since it involves only $m\times m$ Gramians.

In the following, we investigate under which conditions on the system matrix $A$ in \eqref{system0} the standard CS of reduced system \eqref{eq_red_autonomous} serves as a good approximation to the TCS of original system \eqref{system0}.
Although the qualitative intuition that weak cross-coupling should improve the reduced approximation is natural, turning this intuition into explicit non-asymptotic bounds is not immediate. The reason is that, in original system \eqref{system0}, the target-state dynamics associated with $A_{11}$ are still influenced by the full system matrix $A$, which makes the Gramian gap and the resulting VCS/AECS objective errors nontrivial to analyze.

\begin{remark}
We briefly compare the computational complexity of the reduced method with that of the general target-VCS/AECS formulation. Once the relevant controllability Gramians have been precomputed, both formulations can be solved within the same projected-gradient framework, and the dominant cost per iteration is $O(m^3)$ in both cases, since the matrix inversion is always performed on an $m\times m$ matrix. The difference arises in the Gramian-precomputation stage: in the general formulation, each output controllability Gramian $W_i(T)\in\mathbb{R}^{m\times m}$ is obtained through full-order computations in $\mathbb{R}^{n\times n}$, so the preprocessing cost is governed by $n$, whereas in the reduced formulation, each $W_{i,\rm red}(T)$ is computed only from $A_{11}\in\mathbb{R}^{m\times m}$. Hence, under standard dense implementations, the dominant preprocessing cost scales as $O(n^3)$ for the general formulation and as $O(m^3)$ for the reduced formulation.
\end{remark}


\subsection{Error and Structural Analysis between the Reduced and Output Controllability Gramians}

In this subsection, we provide both quantitative error bounds and a structural interpretation of the gap between reduced controllability Gramian \eqref{eq_con_Gra_red} and output controllability Gramian \eqref{output_con_Gra} by analyzing
\begin{align}
    \|W_{\rm red}(p,T) - W(p,T)\| \leq \varepsilon_T(p), \label{eq_W_error}
\end{align}
where
\begin{align}
    \varepsilon_T(p) &:= \sum_{i=1}^m p_i \|\Delta W_i(T)\| \label{def_epsilon},\\
    \Delta W_i(T)&:=W_{i,{\rm red}}(T) - W_i(T). \label{def_DeltaW_i}
\end{align}

To obtain an upper bound for $\|\Delta W_i(T)\|$, we first establish an integral representation of the difference
\begin{align}
  X(t):=\exp(A_{11}t)-C\exp(At)C^\top \label{eq_X}
\end{align}
using a variation-of-constants argument. For this purpose, define
\begin{align}
    E:=\begin{pmatrix}
        0 & -A_{12}
    \end{pmatrix}. \label{def_E}
\end{align}

\begin{lemma} \label{Lem_Key_diffW}
With \(C\), \(A_{11}\), \(X(t)\), and \(E\) defined in \eqref{def_C}, \eqref{eq_A_partition}, \eqref{eq_X}, and \eqref{def_E}, respectively, we have
    \begin{align}
       X(t) = \int_0^t \exp(A_{11}(t-s)) E \exp(A s) C^\top \D s. \label{key_relation}
    \end{align}
\end{lemma}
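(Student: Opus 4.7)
The plan is to apply a variation-of-constants (Duhamel) argument to the matrix-valued function $Y(t) := C \exp(At) C^\top$, since $X(t) = \exp(A_{11}t) - Y(t)$ by definition. The strategy is to show that $Y(t)$ satisfies an inhomogeneous linear ODE whose forcing term is precisely $-E \exp(As) C^\top$, after which the claimed integral representation is immediate.

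First I would verify the initial condition: since $C = \begin{pmatrix} I_m & 0 \end{pmatrix}$, we have $Y(0) = C C^\top = I_m$. Next I would differentiate $Y(t)$ directly to obtain $\dot Y(t) = C A \exp(At) C^\top$, and then use the block structure of $A$ in \eqref{eq_A_partition} to compute
\begin{align*}
CA = \begin{pmatrix} A_{11} & A_{12} \end{pmatrix}, \qquad A_{11} C = \begin{pmatrix} A_{11} & 0 \end{pmatrix},
\end{align*}
so that $CA - A_{11} C = \begin{pmatrix} 0 & A_{12} \end{pmatrix} = -E$, recalling the definition \eqref{def_E}. Substituting $CA = A_{11} C - E$ into the expression for $\dot Y(t)$ yields
\begin{align*}
\dot Y(t) = A_{11} Y(t) - E \exp(At) C^\top, \qquad Y(0) = I_m.
\end{align*}

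Finally, applying the variation-of-constants formula to this linear inhomogeneous matrix ODE gives
\begin{align*}
Y(t) = \exp(A_{11} t) - \int_0^t \exp(A_{11}(t-s)) E \exp(As) C^\top \, \D s,
\end{align*}
and subtracting this from $\exp(A_{11} t)$ yields exactly \eqref{key_relation}. There is no serious obstacle: the only points requiring care are the bookkeeping of the sign in $E = \begin{pmatrix} 0 & -A_{12} \end{pmatrix}$ and the block-matrix identity $CA = A_{11} C - E$, which is where the asymmetry between the full dynamics governed by $A$ and the restricted dynamics governed by $A_{11}$ enters the estimate. Once this identity is in hand, the rest is a routine application of Duhamel's principle.
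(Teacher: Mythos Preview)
Your proposal is correct and follows essentially the same variation-of-constants argument as the paper. The only cosmetic difference is that you carry the factor $C^\top$ through the ODE from the outset by setting $Y(t)=C\exp(At)C^\top$, whereas the paper works with $Y(t)=C\exp(At)$ and postmultiplies by $C^\top$ at the final step; the key identity $CA=A_{11}C-E$ and the Duhamel formula are identical in both.
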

\begin{proof}
    Let $Y(t):=C\exp (At)$. Then,
    \begin{align}
        \dot{Y}(t) &= CA\exp(At) \\
        &= A_{11}Y(t)-E\exp(At) \label{diffeq_X}
    \end{align}
    with $Y(0)=C$.
    Here, the second equality follows from
    \begin{align*}
        CA = \begin{pmatrix}
            A_{11} & A_{12}
        \end{pmatrix} = A_{11}C -E.
    \end{align*}
    The solution to \eqref{diffeq_X} is given by
    \begin{align}
        Y(t) = \exp(A_{11}t)Y(0) -\int_0^t \exp(A_{11}(t-s))E \exp(As) \D s. \label{solution_X}
    \end{align}
    Substituting \eqref{solution_X} into the right hand side of
    \begin{align*}
        X(t) = (\exp(A_{11}t)C-Y(t))C^\top,
    \end{align*}
    we obtain \eqref{key_relation}. \qed
\end{proof}

To obtain an upper bound for $\|\Delta W_i(T)\|$, we next derive exponential bounds on the matrix exponentials $\exp(At)$ and $\exp(A_{11}t)$, based on the logarithmic norm \cite{trefethen2005spectra} defined by 
\begin{align*}
\mu(A):=\lambda_{\max}\left(\frac{A+A^\top}{2}\right).
\end{align*}

\begin{lemma} \label{lem_Gronwall}
Under the partition \eqref{eq_A_partition},
 for all $t\geq 0$,
\begin{align*}
        \|\exp(At)\| \leq \e^{\mu(A) t},\quad \|\exp(A_{11}t)\| \leq \e^{\mu(A_{11})t}\leq  \e^{\mu(A) t}.
\end{align*}
\end{lemma}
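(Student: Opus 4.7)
The plan is to establish the three inequalities in order. For the first two, the key observation is that the quantity $\|\exp(At)v\|^2$ admits a tractable differential inequality in which the logarithmic norm appears naturally as the growth rate constant.

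First, I would fix an arbitrary $v\in\mathbb{R}^n$ and set $x(t):=\exp(At)v$, so that $\dot x(t)=Ax(t)$ and $x(0)=v$. Differentiating $\|x(t)\|^2=x(t)^\top x(t)$ yields
\begin{align*}
\frac{\D}{\D t}\|x(t)\|^2 = x(t)^\top(A+A^\top)x(t) \leq 2\mu(A)\,\|x(t)\|^2,
\end{align*}
where the inequality uses the variational characterization $\mu(A)=\lambda_{\max}((A+A^\top)/2)$ applied through the Rayleigh quotient. At points where $\|x(t)\|>0$, Gronwall's inequality (or direct integration after dividing by $\|x(t)\|^2$) gives $\|x(t)\|\leq \e^{\mu(A)t}\|v\|$; the case $v=0$ is trivial, and continuity handles any isolated zeros. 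Taking the supremum over unit vectors $v$ yields the operator norm bound $\|\exp(At)\|\leq \e^{\mu(A)t}$. The identical argument applied to $\dot y(t)=A_{11}y(t)$ on $\mathbb{R}^m$ yields the middle inequality $\|\exp(A_{11}t)\|\leq \e^{\mu(A_{11})t}$.

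It remains to show $\mu(A_{11})\leq \mu(A)$. The partition in \eqref{eq_A_partition} gives
\begin{align*}
\frac{A_{11}+A_{11}^\top}{2} = C\,\frac{A+A^\top}{2}\,C^\top,
\end{align*}
since $C=(I_m\ 0)$ selects the leading $m\times m$ principal submatrix of the symmetric part. For any unit vector $w\in\mathbb{R}^m$, the padded vector $\tilde w:=C^\top w\in\mathbb{R}^n$ is also of unit norm, and
\begin{align*}
w^\top \frac{A_{11}+A_{11}^\top}{2} w = \tilde w^\top \frac{A+A^\top}{2}\tilde w \leq \lambda_{\max}\!\left(\frac{A+A^\top}{2}\right) = \mu(A).
\end{align*}
Taking the maximum over unit $w$ yields $\mu(A_{11})\leq \mu(A)$, which completes the chain.

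No step poses a real obstacle; the only minor care needed is handling possible zeros of $t\mapsto\|x(t)\|^2$ when dividing in the Gronwall step, which is standard. \qed
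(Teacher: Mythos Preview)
Your proof is correct and follows essentially the same approach as the paper: differentiate $\|\exp(At)v\|^2$, bound via the Rayleigh quotient of the symmetric part, integrate, and then pass to the operator norm. The only cosmetic difference is in the last step, where the paper invokes Cauchy's interlacing theorem for $\mu(A_{11})\le\mu(A)$ while you give the direct embedding/Rayleigh-quotient argument that proves the same inequality; as a side note, since $\exp(At)$ is invertible, $x(t)\neq 0$ whenever $v\neq 0$, so the zero-handling caveat is unnecessary.
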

\begin{proof}
As described in \cite[Chapter~IV]{trefethen2005spectra}, for all $t\geq 0$,
\begin{align*}
\|\exp(At)\| \leq \e^{\mu(A) t},\qquad 
\|\exp(A_{11}t)\| \leq \e^{\mu(A_{11}) t}.
\end{align*}
Since $\mu(A_{11})\leq \mu(A)$, the desired inequality follows.
In fact,
since $(A_{11}+A_{11}^\top)/2$ is a principal submatrix of $(A+A^\top)/2$, Cauchy's interlacing theorem (see \cite[Theorem~4.3.28]{horn2012matrix}) yields
\begin{align*}
\mu(A_{11})=\lambda_{\max}\left(\frac{A_{11}+A_{11}^\top}{2}\right) \leq \lambda_{\max}\left(\frac{A+A^\top}{2}\right) = \mu(A).
\end{align*}
This completes the proof. \qed
\end{proof}

If the logarithmic norm satisfies $\mu(A)<0$, then both $A$ and $A_{11}$ are stable, since Lemma~\ref{lem_Gronwall} implies that
\begin{align*}
\|\exp(At)\|\le \e^{\mu(A)t}\to 0,\quad    \|\exp(A_{11}t)\|\le \e^{\mu(A)t}\to 0
\end{align*}
  as $t\to\infty$. However, stability does not, in general, transfer between a matrix and its principal submatrix: for example, $A=\begin{pmatrix}1&1\\-3&-2\end{pmatrix}$ is stable whereas $A_{11}=1$ is unstable, while $A=\begin{pmatrix}-1&10\\ 0&1\end{pmatrix}$ is unstable even though $A_{11}=-1$ is stable. These examples demonstrate that relying solely on the stability of either block can be misleading. In contrast, verifying $\mu(A)<0$ guarantees stability of both the full and reduced systems, highlighting the importance of analyzing $\mu(A)$ as a unified stability criterion.

Using Lemmas \ref{Lem_Key_diffW} and \ref{lem_Gronwall},
we obtain the following expression and bound on $\Delta W_i(T)$.

\begin{theorem} \label{thm:explicit-DeltaWi}
Let \(C\) and \(E\) be defined by \eqref{def_C} and \eqref{def_E}, and let \(A\) be partitioned as in \eqref{eq_A_partition}. For any \(T>0\), define
\(\Delta W_i(T)\) as in \eqref{def_DeltaW_i}.
Then,
    \begin{align}
        \Delta W_i(T) = \int_0^T G_i(t) \D t, \label{Expression_Delta_Wi}
    \end{align}
    where
    \begin{align}
        &G_i(t) \label{Expression_Gi}\\ 
        &:=\int_0^t \exp(A_{11}(t-s)) E \exp(As) e_i^{(n)}(e_i^{(m)})^\top \exp(A_{11}^\top t) \D s \\
        &+\int_0^t \exp(A_{11}t)e_i^{(m)}(e_i^{(n)})^\top \exp(A^\top s) E^\top  \exp(A_{11}^\top (t-s)) \D s.
    \end{align}
Moreover,
\begin{align}
\|\Delta W_i(T)\|\le \Phi_{\mu(A)}(T)\|A_{12}\|, \label{norm_Delta_Wi}
\end{align}
where
\begin{align}
\Phi_{\mu(A)}(T):=
\begin{cases}
    \frac{\e^{2\mu(A) T}(2\mu(A) T-1)+1}{2\mu(A)^2}\quad (\mu(A)\neq 0)\\
     T^2\quad (\mu(A)=0).
\end{cases} \label{eq_Phi_constant}
\end{align}
In particular,
    if $\mu(A)<0$, then $\lim_{T\rightarrow \infty} \Delta W_i(T)$ exists and
    \begin{align*}
        \lim_{T\rightarrow \infty} \|\Delta W_i(T)\| \leq \frac{1}{2\mu(A)^2} \|A_{12}\|.
    \end{align*}
\end{theorem}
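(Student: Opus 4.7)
The plan is to translate the matrix identity in Lemma \ref{Lem_Key_diffW} into a vector-level integral representation for $u_i(t)-v_i(t)$, where $u_i(t):=\exp(A_{11}t)e_i^{(m)}$ and $v_i(t):=C\exp(At)e_i^{(n)}$, and then substitute it into the difference of the two Gramian integrands. Writing $W_{i,\mathrm{red}}(T)=\int_0^T u_i u_i^\top \D t$ and $W_i(T)=\int_0^T v_i v_i^\top \D t$, the observation $C^\top e_i^{(m)} = e_i^{(n)}$ (valid because $C=(I_m\ 0)$ and $i\le m$) converts Lemma \ref{Lem_Key_diffW} into the vector identity
$$u_i(t)-v_i(t) \,=\, X(t)\,e_i^{(m)} \,=\, \int_0^t \exp(A_{11}(t-s))\,E\,\exp(As)\,e_i^{(n)}\,\D s.$$
Applying the telescoping identity for $uu^\top-vv^\top$ in terms of $u-v$ to the integrand of $\Delta W_i(T)$ and substituting the above integral representation produces the two-term expression \eqref{Expression_Gi} for $G_i(t)$.

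For the operator-norm bound \eqref{norm_Delta_Wi}, I would apply submultiplicativity factor by factor inside each integrand of $G_i$. Lemma \ref{lem_Gronwall} gives $\|\exp(At)\|,\|\exp(A_{11}t)\|\le \e^{\mu(A)t}$; the rank-one factors $e_i^{(n)}(e_i^{(m)})^\top$ and $e_i^{(m)}(e_i^{(n)})^\top$ have unit spectral norm; and the key observation $\|E\|=\|A_{12}\|$ follows from $EE^\top = A_{12}A_{12}^\top$. A straightforward accounting then shows that each integrand in $G_i(t)$ has operator norm bounded by $\e^{2\mu(A)t}\|A_{12}\|$, so after the $s$-integration $\|G_i(t)\|\le 2t\,\e^{2\mu(A)t}\|A_{12}\|$, and hence $\|\Delta W_i(T)\|\le \|A_{12}\|\int_0^T 2t\,\e^{2\mu(A)t}\D t$. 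Evaluating the last integral by integration by parts (when $\mu(A)\neq 0$) and directly (when $\mu(A)=0$) reproduces precisely the two cases of $\Phi_{\mu(A)}(T)$ in \eqref{eq_Phi_constant}.

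For the asymptotic claim with $\mu(A)<0$, the pointwise bound $2t\,\e^{2\mu(A)t}\|A_{12}\|$ is absolutely integrable on $[0,\infty)$, which guarantees existence of $\lim_{T\to\infty}\Delta W_i(T)$; taking the limit in the closed form of $\Phi_{\mu(A)}(T)$ makes the term $\e^{2\mu(A)T}(2\mu(A)T-1)$ vanish by exponential dominance, leaving the bound $\|A_{12}\|/(2\mu(A)^2)$. The main obstacle is essentially bookkeeping rather than deep analysis: one must carefully track the index mismatch between $e_i^{(m)}$ and $e_i^{(n)}$ via $C^\top e_i^{(m)}=e_i^{(n)}$ and assemble the telescoping of the rank-one integrands into the precise symmetric form \eqref{Expression_Gi}. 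Once the vector-level integral representation of $u_i-v_i$ from Lemma \ref{Lem_Key_diffW} is in hand, the remaining estimates reduce to routine applications of submultiplicativity, Lemma \ref{lem_Gronwall}, and integration by parts.
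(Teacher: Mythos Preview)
Your proposal is correct and follows essentially the same route as the paper's proof: the paper likewise uses $C^\top e_i^{(m)}=e_i^{(n)}$ together with Lemma~\ref{Lem_Key_diffW} to rewrite the difference of the rank-one integrands as $X(t)e_i^{(m)}(e_i^{(m)})^\top\exp(A_{11}^\top t)+C\exp(At)C^\top e_i^{(m)}(e_i^{(m)})^\top X(t)^\top$, and then bounds $\|G_i(t)\|\le 2t\,\e^{2\mu(A)t}\|A_{12}\|$ via submultiplicativity, $\|E\|=\|A_{12}\|$, and Lemma~\ref{lem_Gronwall}, before evaluating $\int_0^T 2t\,\e^{2\mu(A)t}\,\D t$. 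The only minor remark is that the correct telescoping $u u^\top-v v^\top=(u-v)u^\top+v(u-v)^\top$ (which the paper's proof derives) places $C\exp(At)e_i^{(n)}$ rather than $\exp(A_{11}t)e_i^{(m)}$ in front of the second integral of $G_i$; this discrepancy with the displayed \eqref{Expression_Gi} does not affect the norm estimate, and your outline handles it correctly.
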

\begin{proof}
First, we establish \eqref{Expression_Delta_Wi}.
Definitions \eqref{def_DeltaW_i} and \eqref{eq_X} imply that
    \begin{align}
        &\Delta W_i(T) \\
        &= \int_0^T\left( \exp(A_{11}t)e_i^{(m)}(e_i^{(m)})^\top \exp(A_{11}^\top t) \right.\\
        &\quad \left. -C\exp(At)e_i^{(n)}(e_i^{(n)})^\top \exp(A^\top t)C^\top \right) \D t \\
        &=\int_0^T \left(X(t)e_i^{(m)}(e_i^{(m)})^\top \exp(A_{11}^\top t) \right.\\
        &\quad \left.+ C\exp(At)C^\top e_i^{(m)}(e_i^{(m)})^\top X(t)^\top \right) \D t, \label{Cal_Delta_W}
    \end{align}
    where
    the second equality follows from the identity
       $C^\top e_i^{(m)} = e_i^{(n)}$. 
    Applying Lemma \ref{Lem_Key_diffW} to \eqref{Cal_Delta_W} yields \eqref{Expression_Delta_Wi}.

Next, we derive \eqref{norm_Delta_Wi}.
    Using \eqref{Expression_Gi}, sub-multiplicativity of the operator norm $\|\cdot\|$,
$\|e_i^{(n)}(e_i^{(m)})^\top\|=1$, $\|E\|=\|A_{12}\|$, and Lemma~\ref{lem_Gronwall}, the first integrand in \eqref{Expression_Gi} can be bounded as
\begin{align}
&\bigl\|
   \exp(A_{11}(t-s))
   E
   \exp(A s)
   e_i^{(n)}(e_i^{(m)})^\top
   \exp(A_{11}^{\top}t)
\bigr\| \\
 &\leq
   \|\exp(A_{11}(t-s))\|\cdot
   \|E\|\cdot
   \|\exp(A s)\|\cdot
   \|e_i^{(n)}(e_i^{(m)})^\top\| \\
   &\quad \cdot \|\exp(A_{11}^\top t)\| \\
 &\leq
    \e^{\mu(A)(t-s)}
   \|A_{12}\|
    \e^{\mu(A) s}
    \e^{\mu(A) t} = \|A_{12}\| \e^{2\mu(A) t}.
\end{align}
The second integrand in \eqref{Expression_Gi} is bounded in the same
way, and hence
\begin{align*}    
   \|G_i(t)\|
      \le
      2
      \int_{0}^{t}
         \|A_{12}\|\e^{2\mu(A) t}
      \D s
      =
      2\|A_{12}\|
      t\e^{2\mu(A) t}.
      \end{align*}
Therefore,
\begin{align*}
    \|\Delta W_i(T)\| \leq \int_0^T \|G_i(t)\| \D t =  2\|A_{12}\|\int_{0}^{T} t\,\e^{2\mu(A) t} \D t,
\end{align*}
which yields
\eqref{norm_Delta_Wi}. \qed
\end{proof}

Choice \eqref{def_C}
is not only natural for defining the target controllability
score---since the target nodes are then identified with the first $m$ state
coordinates---but also serves as a structural assumption that enables
the simple and interpretable error estimates established in
Theorem~\ref{thm:explicit-DeltaWi}.
This choice leads
to the block decomposition \eqref{eq_A_partition}
in which the approximation error depends only on the off-diagonal block
$A_{12}$. The decomposition permits the simple estimate \eqref{norm_Delta_Wi}.
For general $C$, by contrast, the error representation
depends on multiple matrix blocks ($A_{12}$, $A_{21}$,
$A_{22}$) and on $\|C\|$.

\begin{figure}[t]
    \centering
    \includegraphics[width=7cm]{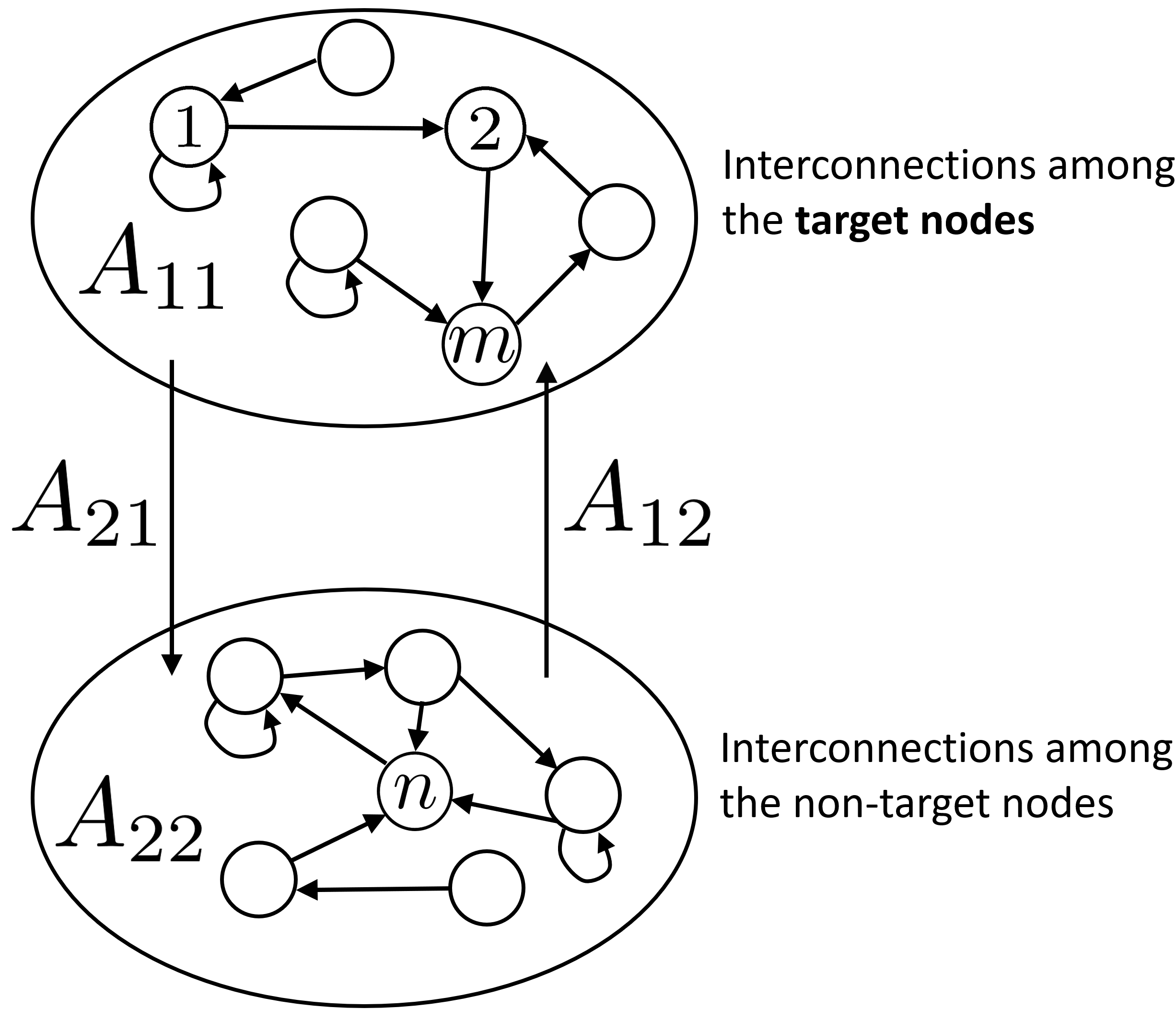}
    \caption{Illustration of the block decomposition of $A$ and the role of $A_{12}$.
The block $A_{12}$ represents upward coupling from the lower-level subsystem to the higher-level subsystem.
When $\|A_{12}\|$ is small and the exponential factor $\Phi_{\mu(A)}(T)$ is sufficiently small, the reduced Gramian $W_{\mathrm{red}}(p,T)$ provides a close approximation to the full output controllability Gramian $W(p,T)$.}
    \label{fig:hierarehical}
\end{figure}

The bound in \eqref{norm_Delta_Wi} shows that
the approximation error scales linearly with the cross-coupling magnitude
$\|A_{12}\|$, with a time- and dynamics-dependent prefactor $\Phi_{\mu(A)}(T)$.
In block decomposition \eqref{eq_A_partition},
the block $A_{12}$ encodes the direct influence from the lower-level
subsystem to the higher-level subsystem.
Its norm therefore quantifies the inter-layer coupling: a small $\|A_{12}\|$
implies weak upward influence, so the reduced Gramian $W_{\mathrm{red}}(p,T)$
closely approximates the full output-controllability Gramian $W(p,T)$.
Our bound in Theorem \ref{thm:explicit-DeltaWi} shows that for networks with sparse or weak
upward connections the controllability analysis may safely ignore
the peripheral dynamics (See Fig.~\ref{fig:hierarehical}).
This expectation is numerically supported in Section~\ref{Sec_numerical}, where the results indicate that the bound provides a useful estimate for small horizons $T$, and that networks with smaller~$\|A_{12}\|$ tend to exhibit a smaller approximation error in the TCS.

This block-structured interpretation is not merely theoretical:
many real-world networks naturally exhibit such hierarchical or layered organization,
where interactions between subsystems are sparse or asymmetric.
Examples include brain connectomes, power grids, and multi-scale biochemical networks
\cite{espina2020distributed,meunier2010modular,ravasz2002hierarchical}.

\begin{remark}
The key difference between the error analysis of \cite{casadei2020model}
and our approach lies in the definition of the output operator. In \cite{casadei2020model}, the output is not taken as individual node states but rather as aggregate variables formed from the non–actuated part of the network. This design is motivated by applications concerned with the controllability of collective behaviors, and it imposes a structural constraint on the output matrix $C$: it must represent such aggregated quantities. Consequently, the resulting bounds involve general norms of both $A$ and $C$, unlike our setting.
\end{remark}

\subsection{Relative (Multiplicative) Gramian Error Analysis}

Our next goal is to convert additive Gramian error
\eqref{eq_W_error} into a relative multiplicative error analysis.
An additive error bound measures the absolute difference between 
$W_{\mathrm{red}}$ and $W$, whereas a multiplicative error bound 
quantifies their relative deviation.
Since the objective function $h_T(p)$ of optimization problem~\eqref{prob:unstable} involves $\det(W)$ and $W^{-1}$, 
its sensitivity is naturally governed by relative deviations of $W$, 
rather than by absolute differences. 
For this reason, multiplicative error bounds are directly relevant to the 
analysis of the target VCS/AECS. 
The additive error estimates established earlier provide 
the foundation for deriving the multiplicative comparisons 
presented below.

To derive a multiplicative error bound,
we normalize additive error upper bound \eqref{def_epsilon} by the smallest eigenvalue $\lambda_{\min}(W(p,T))$, thereby defining
\begin{align}
   \delta_T(p) := \frac{\varepsilon_T(p)}{\lambda_{\min}(W(p,T))}, \label{def_delta}
\end{align}
for each $T>0$ and $p\in X_T\cap\Delta_m$.
Note that $\lambda_{\rm min}(W(p,T))>0$ for any $p\in X_T$,
because $p\in X_T$ guarantees that corresponding virtual system \eqref{eq:lti_virtual3} is output controllable, as explained in Section~\ref{Sec_output_con}.

\begin{lemma}[Pointwise error bound]\label{lem:pointwise-delta}
For each $T>0$ and $p\in X_T\cap\Delta_m$, define
   $\delta_T(p)$ in \eqref{def_delta}.
Then,
\begin{align}
   |W_{\mathrm{red}}(p,T)
   - W(p,T)|\preceq \delta_T(p)W(p,T).
   \label{eq:pointwise-sandwich}
\end{align}
\end{lemma}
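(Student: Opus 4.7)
The plan is to observe that the desired inequality is purely a linear-algebraic consequence of two facts already in hand: the operator-norm bound $\|W_{\mathrm{red}}(p,T)-W(p,T)\|\le\varepsilon_T(p)$ from \eqref{eq_W_error}, and the lower bound $W(p,T)\succeq\lambda_{\min}(W(p,T))I$, which is strictly positive because $p\in X_T$. The symmetry of $W_{\mathrm{red}}(p,T)-W(p,T)$ (both Gramians are symmetric) lets us pass from an operator-norm bound directly into the Loewner partial order, and a single rescaling then produces the sandwich.

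First, I would set $D:=W_{\mathrm{red}}(p,T)-W(p,T)$ and note that $D=D^{\top}$. For any symmetric $D$, the operator-norm bound $\|D\|\le\varepsilon_T(p)$ is equivalent to $-\varepsilon_T(p)I\preceq D\preceq\varepsilon_T(p)I$, i.e., $|D|\preceq\varepsilon_T(p)I$ in the notation fixed in the introduction. This is the step where the additive Gramian bound from Theorem~\ref{thm:explicit-DeltaWi} enters, via \eqref{eq_W_error}--\eqref{def_epsilon}.

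Next, since $p\in X_T$ gives $W(p,T)\succ O$, we have $\lambda_{\min}(W(p,T))>0$ and $W(p,T)\succeq \lambda_{\min}(W(p,T))\,I$, so
\begin{align}
I \preceq \frac{1}{\lambda_{\min}(W(p,T))}\,W(p,T).
\end{align}
Multiplying by the scalar $\varepsilon_T(p)\ge 0$ preserves the Loewner order, yielding
\begin{align}
\varepsilon_T(p)\,I \preceq \frac{\varepsilon_T(p)}{\lambda_{\min}(W(p,T))}\,W(p,T)=\delta_T(p)\,W(p,T).
\end{align}
Chaining this with $|D|\preceq\varepsilon_T(p)I$ and using transitivity of the Loewner order gives exactly \eqref{eq:pointwise-sandwich}.

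There is no real obstacle here: the proof is essentially a two-line chain of Loewner inequalities. The only point requiring mild care is the justification that $\|\cdot\|\le\varepsilon_T(p)$ promotes to $|\cdot|\preceq\varepsilon_T(p)I$, which depends on $D$ being symmetric; this is immediate from the definitions of $W_{\mathrm{red}}(p,T)$ in \eqref{eq_con_Gra_red} and $W(p,T)$ in \eqref{eq_W(p,T)}, since each $W_{i,\mathrm{red}}(T)$ and $W_i(T)$ is symmetric. One could alternatively state the result using $\lambda_{\min}$ replaced by any lower bound on the smallest eigenvalue of $W(p,T)$, which might be more convenient in downstream applications, but the cleanest formulation is the one given.
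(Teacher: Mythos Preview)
Your proof is correct and follows essentially the same approach as the paper: pass from the operator-norm bound \eqref{eq_W_error} to the Loewner sandwich $|D|\preceq\varepsilon_T(p)I$ using the symmetry of $D$, then replace $I$ by $W(p,T)/\lambda_{\min}(W(p,T))$ and chain. The paper's argument is slightly terser but identical in substance.
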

\begin{proof}
Since for any symmetric matrix $S$ we have $|S| \preceq \|S\| I$,
 bound \eqref{eq_W_error} implies
 \begin{align}
  |W_{\mathrm{red}}(p,T)-W(p,T)|
  \preceq \varepsilon_T(p) I, \label{eq:additive-sandwich-2}
 \end{align} 
  by setting $S=W_{\rm red}(p,T)-W(p,T)$.
By definition,
$W(p,T) \succeq \lambda_{\min}(W(p,T)) I$
and thus
\begin{align}
  \varepsilon_T(p) I
  \preceq \delta_T(p) W(p,T).  
\end{align}
Using this in \eqref{eq:additive-sandwich-2},
we obtain \eqref{eq:pointwise-sandwich}. \qed
\end{proof}

Lemma \ref{lem:pointwise-delta} yields
the following corollary of Theorem~\ref{thm:explicit-DeltaWi}.
This corollary serves as a theoretical basis for analyzing the approximation quality of target VCS/AECS.

\begin{corollary}[Uniform error bound]\label{cor:uniform-delta}
Let \(A\) be partitioned as in \eqref{eq_A_partition}.
Let $p^{*}$ and $p^{*}_{\rm red}$ be the unique optimal solutions to problems \eqref{prob:unstable} and \eqref{prob:unstable2}, respectively. Assume $p_{\rm red}^{*}\in X_T$ and
define
\begin{align*}
    \delta_T^* := \frac{\Phi_{\mu(A)}(T)\|A_{12}\|}{\min\{\lambda_{\min}(W(p^*,T)),\lambda_{\min}(W(p_{\rm red}^{*},T))\}},
\end{align*}
where
 $\Phi_{\mu(A)}(T)$ is defined in \eqref{eq_Phi_constant}.
Then, for each $p\in \{p^*, p^*_{\rm red}\}$,
\begin{align}
   |W_{\mathrm{red}}(p,T)
   - W(p,T)|\preceq \delta_T^*W(p,T).
   \label{eq:uniform-sandwich}
\end{align}
\end{corollary}
\begin{proof}
Since $\sum_i p_i=1$ for $p\in\Delta_m$, Theorem \ref{thm:explicit-DeltaWi} implies that
\begin{align}
\varepsilon_T(p)\leq \max_{i\in \{1,\ldots, m\}} \|\Delta W_i(T)\| \leq \Phi_{\mu(A)}(T)\|A_{12}\|, 
\end{align}
Hence, for each $p\in \{p^*, p^*_{\rm red}\}$, 
\begin{align*}
\delta_T(p) &= \frac{\varepsilon_T(p)}{\lambda_{\min}\left(W(p,T)\right)} \leq \delta_T^*.
\end{align*}
Thus, Lemma~\ref{lem:pointwise-delta} yields \eqref{eq:uniform-sandwich} for each $p\in \{p^*, p^*_{\rm red}\}$. \qed
\end{proof}

In the following results, the assumption \(p_{\rm red}^*\in X_T\) is not automatic, because \(p_{\rm red}^*\) is defined as the optimizer of the reduced problem. In practice, this condition can be checked by verifying whether \(\lambda_{\min}(W(p_{\rm red}^*,T))>0\).
In our canonical setting, it is also automatically satisfied whenever \(p_{\rm red}^*\) is a positive vector.

\subsection{Approximation Guarantees for Target VCS/AECS} \label{Subsec_Approx_VCS_AECS}

In this subsection, we establish theoretical guarantees on the accuracy of the reduced-model approximation for the target VCS/AECS. 
More specifically, we derive bounds that compare the objective values obtained from the reduced and original formulations.
For the target VCS, we bound the objective-value gap, whereas for the target AECS, we bound the ratio between the objective values.
These results support the reliability and interpretability of the proposed reduced-order method.

To conduct the following discussion rigorously, we define the sets
\begin{align}
    D^{\rm VCS} &:= \left\{T\in {\bb R}_{>0} \;\middle|\;
        \begin{aligned}
            &\text{problem } \eqref{prob:unstable} \text{ with }h_T=f_T \\
            &\text{admits a unique solution at }T
        \end{aligned}
        \right\}, \label{unique_time_VCS}\\
           D^{\rm AECS} &:= \left\{T\in {\bb R}_{>0} \;\middle|\;
        \begin{aligned}
            &\text{problem } \eqref{prob:unstable} \text{ with }h_T=g_T\\
            &\text{admits a unique solution at }T
        \end{aligned}
        \right\}, \label{unique_time_AECS}\\
    D_{\rm red}^{\rm VCS} &:= \left\{\,T\in {\bb R}_{>0} \;\middle|\;
    \begin{aligned}
        &\text{problem } \eqref{prob:unstable2} 
        \text{ with }h^{\rm red}_T=f^{\rm red}_T \\
        &\text{admits a unique solution at }T
    \end{aligned}
        \right\}, \label{unique_time_VCS2}\\
            D_{\rm red}^{\rm AECS} &:= \left\{\,T\in {\bb R}_{>0} \;\middle|\;
    \begin{aligned}
        &\text{problem } \eqref{prob:unstable2} 
        \text{ with }h^{\rm red}_T=g^{\rm red}_T \\
        &\text{admits a unique solution at }T
    \end{aligned}
        \right\}.\label{unique_time_AECS2}
\end{align}
From Theorem 1 and \cite[Theorem~1]{sato2025uniqueness}, 
for almost all $T>0$ in the sense of Lebesgue measure on ${\bb R}_{>0}$, problems \eqref{prob:unstable} and \eqref{prob:unstable2} admit unique solutions for both the VCS- and AECS-type objectives.
Equivalently,
 the complements of
\begin{align*}
    D^{\rm VCS} \cap D_{\rm red}^{\rm VCS}\quad \text{and}\quad D^{\rm AECS} \cap D_{\rm red}^{\rm AECS}
\end{align*}
 have Lebesgue measure zero.
 Hence, for almost all $T>0$, we have
\begin{align*}
    T \in D^{\rm VCS} \cap D_{\rm red}^{\rm VCS}
    \quad \text{and} \quad
    T \in D^{\rm AECS} \cap D_{\rm red}^{\rm AECS}.
\end{align*}
Note that this does not exclude the existence of exceptional time horizons at which uniqueness fails.
Moreover, $D^{\rm VCS}$ (resp. $D^{\rm AECS}$) and $D_{\rm red}^{\rm VCS}$ (resp. $D^{\rm AECS}_{\rm red}$) are not identical, as shown in Appendix \ref{appendix_ex_uniqueness_set}.

The following theorem compares the target VCS and its reduced counterpart in terms of their objective values, rather than their optimal solutions themselves.
The proof relies on Corollary~\ref{cor:uniform-delta}.

\begin{theorem} \label{Thm_comparison_VCS}
Let \(A\) be partitioned as in \eqref{eq_A_partition} and let
$T\in D^{\rm VCS} \cap D_{\rm red}^{\rm VCS}$.
Let $p^{\rm VCS}$ and $p^{\rm VCS}_{\rm red}$ be the unique optimal solutions to problems \eqref{prob:unstable} with $h_T=f_T$ and \eqref{prob:unstable2} with $h^{\rm red}_T = f^{\rm red}_T$, respectively. Assume $p_{\rm red}^{\rm VCS}\in X_T$ and
define
\begin{align*}
    \delta_T^{\rm VCS} := \frac{\Phi_{\mu(A)}(T)\|A_{12}\|}{\min\{\lambda_{\min}(W(p^{\rm VCS},T)),\lambda_{\min}(W(p_{\rm red}^{\rm VCS},T))\}},
\end{align*}
where  $\Phi_{\mu(A)}(T)$ is defined in \eqref{eq_Phi_constant}.
If $\delta_T^{\rm VCS}<1$, then
\begin{align}
f_T(p^{\rm VCS}_{\rm red}) - f_T(p^{\rm VCS})
\le 2\varepsilon_T^{\rm VCS},
\end{align}
where
\begin{align*}
\varepsilon_T^{\rm VCS}
:=
m\cdot \max\!\left\{
\log(1+\delta_T^{\rm VCS}),\,
-\log(1-\delta_T^{\rm VCS})
\right\}.
\end{align*}
\end{theorem}
\begin{proof}
    Corollary \ref{cor:uniform-delta} implies
\begin{align*}
    W_{\rm red}(p,T) \preceq (1+\delta_T^{\rm VCS})W(p,T)
\end{align*}
for each $p\in \{p^{\rm VCS}, p^{\rm VCS}_{\rm red}\}$,
and therefore, by \cite[Corollary~7.7.4]{horn2012matrix},
\begin{align}
    \det W_{\rm red}(p,T) \leq (1+\delta_T^{\rm VCS})^m \det W(p,T). \label{key_inequaltiy}
\end{align}
Taking logarithms of both sides of \eqref{key_inequaltiy} then yields
\begin{align}
    f_T(p) - m\log \left(1+\delta_T^{\rm VCS}\right) \leq f_T^{\mathrm{red}}(p). \label{f_fred1}
\end{align}
Similarly, since Corollary \ref{cor:uniform-delta} implies
\begin{align*}
(1-\delta_T^{\rm VCS})W(p,T) \preceq W_{\rm red}(p,T),
\end{align*}
 $\delta_T^{\rm VCS}<1$ implies that
\begin{align}
    f_T^{\mathrm{red}}(p) \leq f_T(p) - m\log \left(1-\delta_T^{\rm VCS}\right). \label{f_fred2}
\end{align}
From \eqref{f_fred1} and \eqref{f_fred2}, for each $p\in \{p^{\rm VCS}, p^{\rm VCS}_{\rm red}\}$,
\begin{align*}
    |f_T^{\rm red}(p) - f_T(p)| \leq \varepsilon_T^{\rm VCS},
\end{align*}
and thus
\begin{align}
    f_T(p^{\rm VCS}_{\rm red}) &\leq f_T^{\rm red}(p^{\rm VCS}_{\rm red}) + \varepsilon_T^{\rm VCS} \\
    &\leq f_T^{\rm red}(p^{\rm VCS}) + \varepsilon_T^{\rm VCS} \\
    &\leq f_T(p^{\rm VCS}) + 2\varepsilon_T^{\rm VCS}.
\end{align}
This completes the proof. \qed
\end{proof}

Note that the assumption \(p_{\rm red}^{\rm VCS}\in X_T\) ensures that \(f_T(p_{\rm red}^{\rm VCS})\) is well-defined; see the discussion after Corollary~1 for a practical verification of this condition.

The following theorem compares the target AECS and its reduced counterpart in terms of the original objective values.
The proof also relies on Corollary~\ref{cor:uniform-delta}.

\begin{theorem} \label{Thm_comparison_AECS}
Let \(A\) be partitioned as in \eqref{eq_A_partition} and let
$T\in D^{\rm AECS} \cap D_{\rm red}^{\rm AECS}$.
Let $p^{\rm AECS}$ and $p^{\rm AECS}_{\rm red}$ be the unique optimal solutions to problems \eqref{prob:unstable} with $h_T=g_T$ and \eqref{prob:unstable2} with $h^{\rm red}_T = g^{\rm red}_T$, respectively. Assume $p_{\rm red}^{\rm AECS}\in X_T$ and
define
\begin{align*}
    \delta_T^{\rm AECS} := \frac{\Phi_{\mu(A)}(T)\|A_{12}\|}{\min\{\lambda_{\min}(W(p^{\rm AECS},T)),\lambda_{\min}(W(p_{\rm red}^{\rm AECS},T))\}},
\end{align*}
where  $\Phi_{\mu(A)}(T)$ is defined in \eqref{eq_Phi_constant}.
If $\delta_T^{\rm AECS}<1$, then
\begin{align}
\frac{g_T(p^{\rm AECS}_{\rm red})}{g_T(p^{\rm AECS})}
\le \frac{1+\delta_T^{\rm AECS}}{1-\delta_T^{\rm AECS}}. \label{eq_AECS_ratio_bound}
\end{align}
\end{theorem}
\begin{proof}
Corollary \ref{cor:uniform-delta} implies
\begin{align*}
    W_{\rm red}^{-1}(p,T) \succeq \frac{1}{1+\delta_T^{\rm AECS}}W^{-1}(p,T)
\end{align*}
for each $p\in \{p^{\rm AECS}, p^{\rm AECS}_{\rm red}\}$.
Thus, by \cite[Corollary~7.7.4]{horn2012matrix},
we obtain \begin{align}
    \frac{1}{1+\delta_T^{\rm AECS}}g_T(p) \leq g_T^{\mathrm{red}}(p). \label{g_relation1}
\end{align}
Moreover, if $\delta_T^{\rm AECS}<1$, then
\begin{align*}
    W_{\rm red}^{-1}(p,T) \preceq \frac{1}{1-\delta_T^{\rm AECS}}W^{-1}(p,T),
\end{align*}
which implies \begin{align}
    g_T^{\mathrm{red}}(p) \leq \frac{1}{1-\delta_T^{\rm AECS}}g_T(p). \label{g_relation2}
\end{align}
Since $p_{\rm red}^{\rm AECS}$ is the optimal solution to reduced problem \eqref{prob:unstable2},
$g_T^{\rm red}(p_{\rm red}^{\rm AECS})
\le
g_T^{\rm red}(p^{\rm AECS})$.
Combining this with \eqref{g_relation1} for $p=p_{\rm red}^{\rm AECS}$ and
\eqref{g_relation2} for $p=p^{\rm AECS}$, we obtain
\begin{align}
    \frac{1}{1+\delta_T^{\rm AECS}}\,g_T(p_{\rm red}^{\rm AECS})
 \le
\frac{1}{1-\delta_T^{\rm AECS}}\,g_T(p^{\rm AECS}).
\end{align}
Since $g_T(p^{\rm AECS})$ is positive, we obtain \eqref{eq_AECS_ratio_bound}. \qed
\end{proof}

The assumption \(p_{\rm red}^{\rm AECS}\in X_T\) is required in Theorem~4 to ensure that \(g_T(p_{\rm red}^{\rm AECS})\) is well-defined.

According to Theorems~\ref{Thm_comparison_VCS} and \ref{Thm_comparison_AECS},
the approximation quality is governed by two factors: the coupling strength $\|A_{12}\|$ from non-target states to target states, and the term $\Phi_{\mu(A)}(T)$, which describes how this coupling effect is amplified over the time horizon.
Hence, the reduced problem is particularly effective when the target subsystem is weakly coupled to the rest of the network and when the time-horizon effect is not too large.
In particular, for graph-Laplacian dynamics \cite{mesbahi2010graph}, which will be used in Section~\ref{Sec_numerical}, 
the system matrix necessarily has a zero eigenvalue, and hence $\mu(A)=0$. In this case,
$\Phi_{\mu(A)}(T)=T^2$, so the reduced method is especially accurate for short horizons.
For stable systems with $\mu(A)<0$, this amplification remains bounded for all $T\geq 0$, which suggests that the reduced formulation can remain reliable even for long horizons.

\begin{remark}
The positivity of \(p_{\rm red}^{\rm VCS}\) and \(p_{\rm red}^{\rm AECS}\) is expected to occur more often for denser graphs, where the reduced optimizer is less likely to lie on the boundary of the simplex. A rigorous characterization of graph classes for which this property holds is beyond the scope of the present paper and is left for future work. We also note that the brain-network examples in Section~\ref{Sec_numerical} correspond to this favorable situation, since the computed reduced optimizers are positive in our experiments.
\end{remark}
\section{Numerical experiments using real-world human brain network data} \label{Sec_numerical}

We evaluated target VCS/AECS, along with their reduced approximations, using a human brain-network dataset from \cite{vskoch2022human}. 
The dataset contains structural connectivity matrices for $88$ individuals. 
Each individual's brain network is represented by a $90\times 90$ matrix, 
where the $(i,j)$ entry is the proportion of tractography streamlines seeded in the \(i\)th region of interest (ROI) that reach the \(j\)th ROI, based on the Automatic Anatomical Labeling atlas. 
Thus, the dataset comprises brain networks for $88$ individuals, 
each consisting of $90$ nodes corresponding to distinct brain regions. 
We note that this is the same dataset used in \cite[Section IV]{sato2025uniqueness}.
Our analysis uses only the publicly available anonymized connectivity matrices released in \cite{vskoch2022human}.

We model the individual blood oxygen level-dependent (BOLD) signal dynamics as 
the continuous-time linear system
\begin{align}
   \dot{x}^{(i)}(t) = A^{(i)}x^{(i)}(t) \quad (i=1,\ldots,88), \label{ex_continuous}
\end{align}
where each component of $x^{(i)}(t)$ denotes the BOLD signal of each ROI at time $t$ for the $i$th individual.
The system matrix $A^{(i)}$ is defined by
$A^{(i)} := -\mathcal{L}^{(i)}$,
where $\mathcal{L}^{(i)}$ is the graph Laplacian
\begin{align*}
   \mathcal{L}^{(i)} := 
   {\rm diag}\!\left(\sum_{j=1}^{90}\mathcal{C}^{(i)}_{1j},\,\ldots,\,\sum_{j=1}^{90}\mathcal{C}^{(i)}_{90j}\right) 
   - \mathcal{C}^{(i)}.
\end{align*}
Here $\mathcal{C}^{(i)}\in\mathbb{R}^{90\times 90}$ denotes the transpose of the structural connectivity matrix 
for the $i$th individual. 
The matrices $\mathcal{C}^{(i)}$ are nonsymmetric, although they are nearly symmetric, as mentioned in \cite{vskoch2022human}.

Based on system model \eqref{ex_continuous}, 
we construct the target selection matrices for both VCS and AECS as follows.
For each individual $i\in\{1,\ldots,88\}$ and for a given time horizon $T>0$, 
let $\{s^{\mathrm{VCS}}_1(T),\ldots,s^{\mathrm{VCS}}_m(T)\}$ 
and $\{s^{\mathrm{AECS}}_1(T),\ldots,s^{\mathrm{AECS}}_m(T)\}$ 
denote the indices of the top $m$ ROIs 
ranked by the average VCS and AECS values at horizon $T$, respectively.
Without loss of generality, we relabel the coordinates so that 
the selected ROIs appear in the first $m$ positions of the state vector, as explained in Section~\ref{Sec_TCS}.
For each $i$, this yields a block partition of the system matrix
\begin{align*}
   A^{(i)}_{\bullet} =
   \begin{pmatrix}
     A_{11,\bullet}^{(i)} & A_{12,\bullet}^{(i)} \\
     A_{21,\bullet}^{(i)} & A_{22,\bullet}^{(i)}
   \end{pmatrix}, \quad
   A_{11,\bullet}^{(i)}\in\mathbb{R}^{m\times m},
\end{align*}
where $\bullet\in\{\mathrm{VCS},\mathrm{AECS}\}$ indicates whether 
the target set is determined by VCS or AECS. 
Here, $A_{11,\bullet}^{(i)}$ represents the dynamics restricted to the selected target set, 
whereas $A_{12,\bullet}^{(i)}$ captures the coupling from non-target nodes into the targets. 

For each individual $i\in \{1,\ldots, 88\}$, let $(p^{\mathrm{VCS}})^{(i)}$ and $(p^{\mathrm{AECS}})^{(i)}$ 
denote the target VCS and target AECS, respectively, 
and let $(p^{\mathrm{VCS}}_{\mathrm{red}})^{(i)}$ and $(p^{\mathrm{AECS}}_{\mathrm{red}})^{(i)}$ 
denote their reduced-system counterparts.

\begin{remark}
Standard models of brain activity include nonlinearities in the mapping from neural activity to the hemodynamic/BOLD response \cite{friston2000nonlinear,zeidman2019guide}. In this work, however, we adopt a linear Laplacian-based dynamics described in \eqref{ex_continuous} as a first-order approximation near the resting state. A previous study has shown that, for resting-state low-frequency correlations, linear diffusion on the structural network can capture the principal second-order statistics and, in some cases, match or even outperform more complex nonlinear neural-mass models in predicting functional connectivity from structural connectivity \cite{abdelnour2014network}. This approach has also been supported by prior work in the literature \cite{gu2015controllability, tu2018warnings, pasqualetti2019re, suweis2019brain}.
\end{remark}

\subsection{Comparison of Target and Non-Target Coupling for VCS and AECS} \label{Subsec_comparison}

To assess the influence of non-target regions on the target dynamics, 
we evaluated the operator norms $\|A_{12,\bullet}^{(i)}\|$ across all individuals. 
For each selection method $\bullet\in\{\mathrm{VCS},\mathrm{AECS}\}$, 
define $\{\|A_{12,\bullet}^{(i)}\|\}_{i=1}^{88}$ as the collection of values obtained 
across all subjects. 
We summarize these results using the sample mean 
$\overline{\|A_{12,\bullet}\|}:=\frac{1}{88}\sum_{i=1}^{88}\|A_{12,\bullet}^{(i)}\|$
and the corresponding population standard deviation $\sqrt{\frac{1}{88}\sum_{i=1}^{88}
   \left(\|A_{12,\bullet}^{(i)}\|-\overline{\|A_{12,\bullet}\|}\right)^2 }$.

Table~\ref{tab:summary_all} reveals a consistent pattern across all examined
configurations $(T,m)$: the cross-coupling magnitudes $\|A_{12,\bullet}\|$
associated with VCS are uniformly smaller than those associated with AECS,
irrespective of the choice of time horizon $T$ or the number of selected
targets $m$. This suggests that VCS-based target selection tends to identify
subsets of regions whose dynamics are less influenced by non-target nodes,
resulting in weaker interference from the complementary block compared to
AECS-based selection.

It should be noted, however, that this advantage in terms of cross-coupling
does not necessarily imply better approximation accuracy of the target VCS/AECS, as shown in the next
subsection.

\begin{table*}[t]
  \centering
  \caption{Cross-coupling magnitudes
(mean $\pm$ standard-deviation) across 88 subjects 
for different $(T,m)$ configurations.}
  \label{tab:summary_all}
  \begin{tabular}{cccll}
    \toprule
    $T$ & $m$ & $\|A_{12, {\rm VCS}}\|$ (mean$\pm$std) & $\|A_{12, {\rm AECS}}\|$ (mean$\pm$std) \\
    \midrule
     1   & 3   & $3.124{\times}10^{-1}\,\pm\,8.392{\times}10^{-2}$ & 
                 $7.364{\times}10^{-1}\,\pm\,8.055{\times}10^{-2}$ \\
     1   & 10  & $2.568{\times}10^{-1}\,\pm\,3.512{\times}10^{-2}$ & 
                 $8.004{\times}10^{-1}\,\pm\,8.385{\times}10^{-2}$ \\
     1   & 30  & $3.939{\times}10^{-1}\,\pm\,4.134{\times}10^{-2}$ & 
                 $8.123{\times}10^{-1}\,\pm\,8.745{\times}10^{-2}$ \\
   100   & 3   & $1.218{\times}10^{-1}\,\pm\,3.743{\times}10^{-2}$ & 
                 $7.420{\times}10^{-1}\,\pm\,8.076{\times}10^{-2}$ \\
   100   & 10  & $2.409{\times}10^{-1}\,\pm\,4.249{\times}10^{-2}$ & 
                 $8.004{\times}10^{-1}\,\pm\,8.385{\times}10^{-2}$ \\
   100   & 30  & $3.828{\times}10^{-1}\,\pm\,4.343{\times}10^{-2}$ & 
                 $8.120{\times}10^{-1}\,\pm\,8.564{\times}10^{-2}$ \\
    \bottomrule
  \end{tabular}
\end{table*}

\subsection{Approximation Error Between Target and Reduced Formulations}
Let $\{\|{\rm diff}_{\bullet}^{(i)}\|\}_{i=1}^{88}$ denote the norm difference between the TCS and the reduced-system CS for $\bullet\in\{\mathrm{VCS},\mathrm{AECS}\}$. 
That is, for each subject $i$, 
\begin{align*}
    \|{\rm diff}_{\bullet}^{(i)}\| = 
      \|(p^{\bullet})^{(i)} - (p^{\bullet}_{\mathrm{red}})^{(i)}\|.
\end{align*}
We report the sample mean
$\overline{\|{\rm diff}_{\bullet}\|} := \frac{1}{88}\sum_{i=1}^{88}\|{\rm diff}_{\bullet}^{(i)}\|$
and the corresponding population standard deviation $\sqrt{\frac{1}{88}\sum_{i=1}^{88}\left(\|{\rm diff}_{\bullet}^{(i)}\|-\overline{\|{\rm diff}_{\bullet}\|}\right)^2 }$.

Table~\ref{tab:diffnorm_summary} summarizes the norm differences between the target and reduced formulations of VCS and AECS across 88 subjects under various $(T,m)$ configurations. At $T=1$, VCS exhibits substantially smaller differences than AECS for $m=3, 10, 30$, indicating that the reduced formulation closely approximates the target VCS in the short-horizon setting. This observation is consistent with the theoretical implications of Theorem~\ref{thm:explicit-DeltaWi} (for $\mu(A)=0$), Section~\ref{Subsec_Approx_VCS_AECS}, and Section~\ref{Subsec_comparison}, which suggest that VCS should yield smaller errors when the cross-coupling term $\|A_{12}\|$ is small.

However, as $T$ increases, the situation changes markedly. When $T=100$, VCS produces significantly larger discrepancies than AECS regardless of the value of $m$. This can be explained by the factor $\Phi_{\mu(A)}(T)=T^2$ in Theorem~\ref{thm:explicit-DeltaWi}, which grows rapidly with $T$ and renders the error bounds of Theorems~3 and~4 ineffective. As a result, the expected advantage of VCS based on $\|A_{12}\|$ no longer holds at long horizons, leading to the nontrivial outcome that VCS, despite being favorable at $T=1$, performs worse than AECS at $T=100$. By contrast, AECS achieves relatively consistent accuracy across both short and long horizons, irrespective of $m$, thereby demonstrating its robustness in approximating target AECS across varying time scales.

\begin{table*}[t]
  \centering
  \caption{Differences between target and reduced formulations 
  (mean $\pm$ std of $\|{\rm diff}_{\bullet}\|$) over 88 subjects 
  for various $(T,m)$ configurations.}
  \label{tab:diffnorm_summary}
  \begin{tabular}{cccll}
    \toprule
    $T$ & $m$ & $\|{\rm diff}_{\rm VCS}\|$ (mean$\pm$std) & $\|{\rm diff}_{\rm AECS}\|$ (mean$\pm$std) \\
    \midrule
     1   & 3   & $0.000{\times}10^{0}\,\pm\,0.000{\times}10^{0}$ & 
                  $1.827{\times}10^{-3}\,\pm\,7.529{\times}10^{-4}$ \\
     1   & 10  & $4.465{\times}10^{-5}\,\pm\,4.940{\times}10^{-5}$ & 
                  $1.132{\times}10^{-3}\,\pm\,2.391{\times}10^{-4}$ \\
     1   & 30  & $6.471{\times}10^{-5}\,\pm\,3.177{\times}10^{-5}$ & 
                  $6.887{\times}10^{-4}\,\pm\,8.818{\times}10^{-5}$ \\
   100   & 3   & $2.281{\times}10^{-2}\,\pm\,2.142{\times}10^{-2}$ & 
                  $4.177{\times}10^{-3}\,\pm\,2.046{\times}10^{-3}$ \\
   100   & 10  & $4.082{\times}10^{-2}\,\pm\,1.841{\times}10^{-2}$ & 
                  $3.252{\times}10^{-3}\,\pm\,9.036{\times}10^{-4}$ \\
   100   & 30  & $1.992{\times}10^{-2}\,\pm\,5.357{\times}10^{-3}$ & 
                  $4.399{\times}10^{-3}\,\pm\,5.215{\times}10^{-4}$ \\
    \bottomrule
  \end{tabular}
\end{table*}

Figures~\ref{fig:AECS_T1_m30}--\ref{fig:VCS_T100_m30} collectively show that the reduced-system approximation is faithful for target VCS/AECS at the short horizon $(T=1)$, but only AECS remains well approximated at the long horizon $(T=100)$, whereas VCS exhibits sizable discrepancies between the target and reduced formulations. This qualitative pattern aligns with the node-level evidence in Table~\ref{tab:metrics_nodes_checked}: Target AECS (TAECS) selects exactly the same top 5 regions at $T=1$ and $T=100$, indicating temporal stability of the target set, while target VCS (TVCS) selects markedly different regions across horizons (e.g., middle orbital gyrus, cingulum, cuneus, pallidum at $T=1$ versus amygdala and Heschl’s gyrus at $T=100$), revealing strong sensitivity to $T$. The mismatch is particularly evident at $(T,m)=(100,30)$, where the reduced VCS elevates Node~36 (Right Cingulum Post) and Node~69 (Left Paracentral Lobule) into its top 5, even though neither appears in the corresponding TVCS results---clear evidence that, at longer horizons, the reduced VCS fails to track its target formulation, whereas AECS retains robust agreement across time scales.

\begin{figure}[t]
   \centering
   \begin{minipage}[b]{0.49\linewidth}
      \centering
      \includegraphics[width=\linewidth]{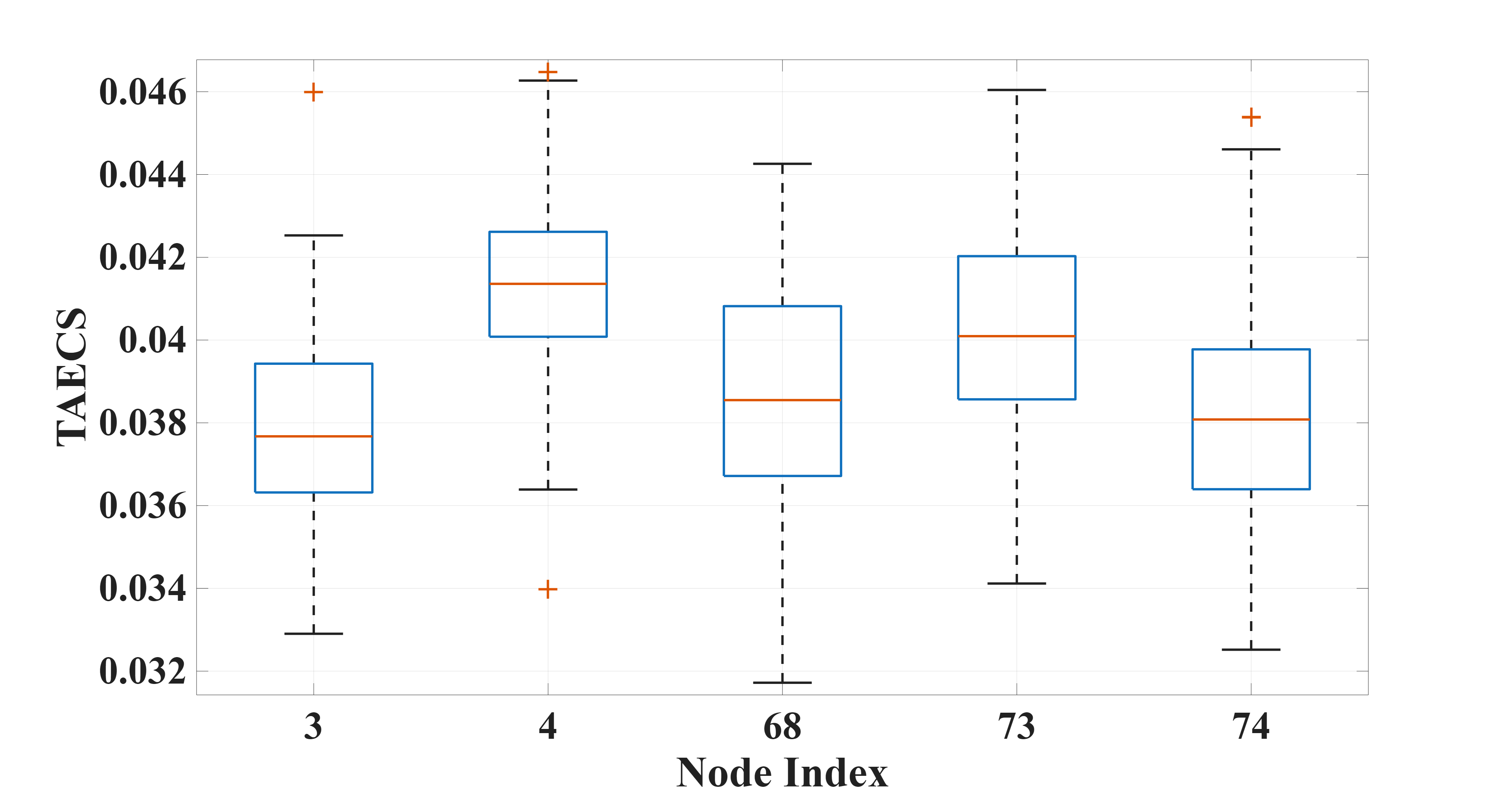}
   \end{minipage}
   \hfill
   \begin{minipage}[b]{0.49\linewidth}
      \centering
      \includegraphics[width=\linewidth]{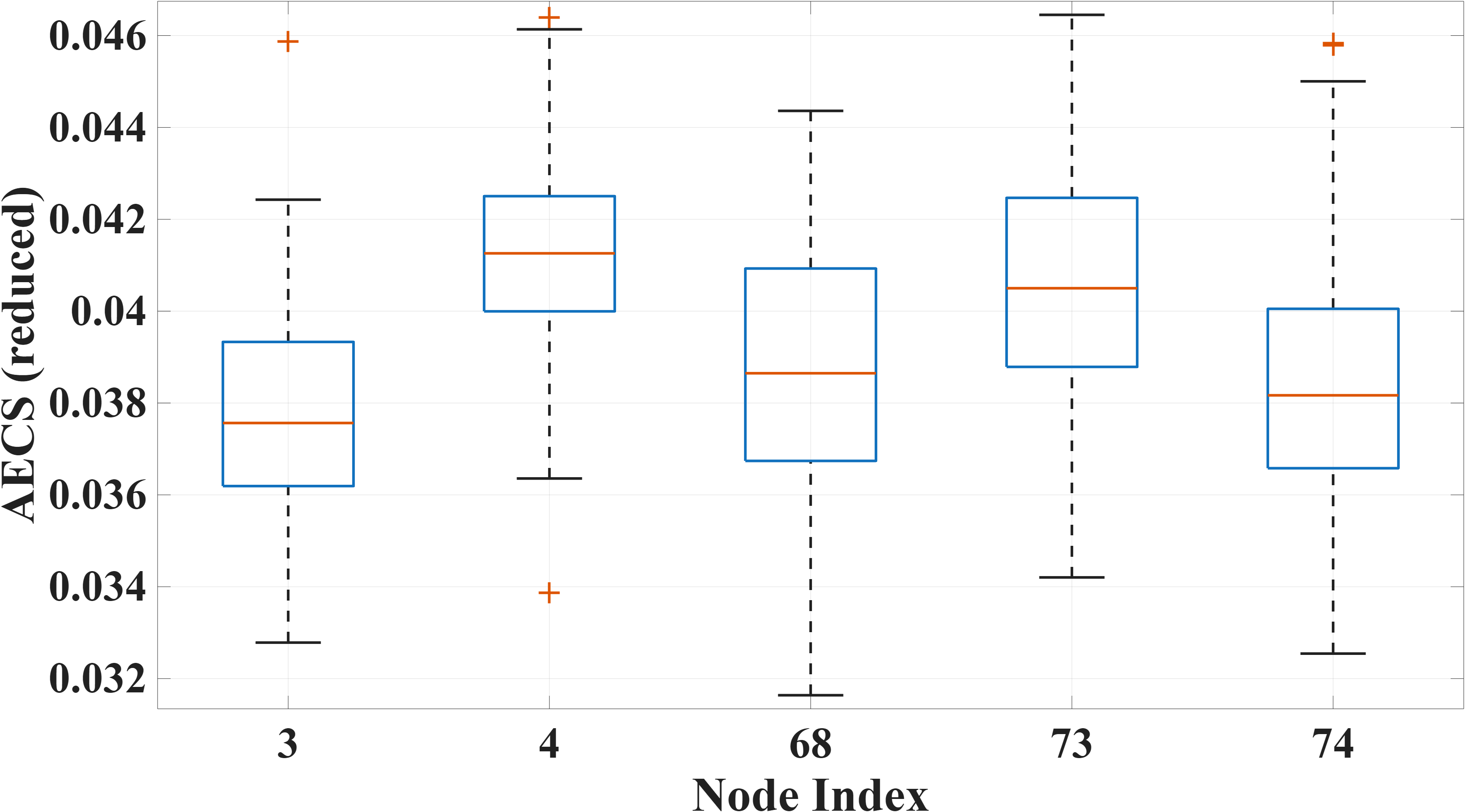}
   \end{minipage}
   \caption{Boxplots of the top 5 nodes for $(T,m)=(1,30)$: target AECS (left) and its reduced-system approximation (right).}
   \label{fig:AECS_T1_m30}
\end{figure}

\begin{figure}[t]
   \centering
   \begin{minipage}[b]{0.49\linewidth}
      \centering
      \includegraphics[width=\linewidth]{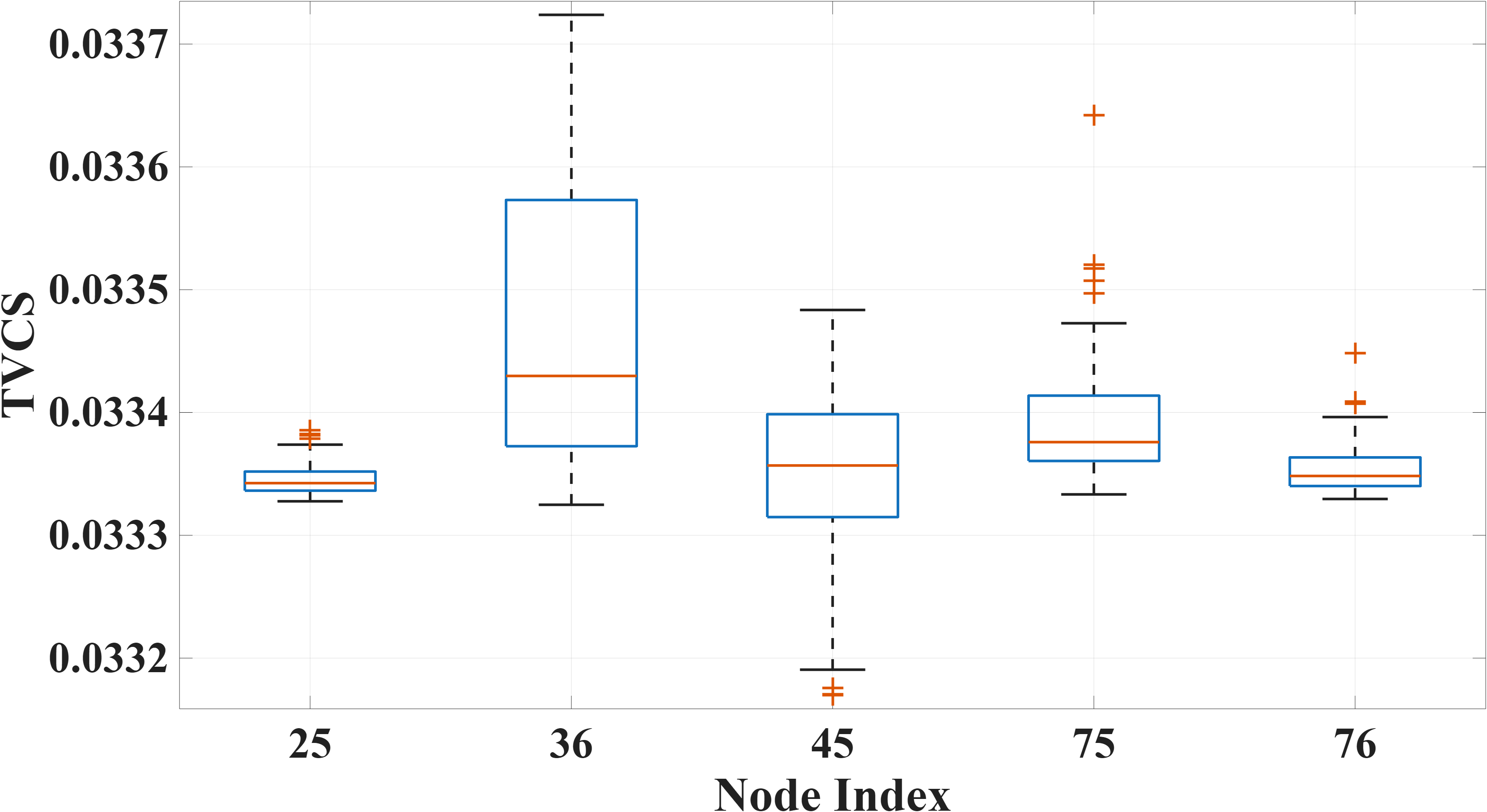}
   \end{minipage}
   \hfill
   \begin{minipage}[b]{0.49\linewidth}
      \centering
      \includegraphics[width=\linewidth]{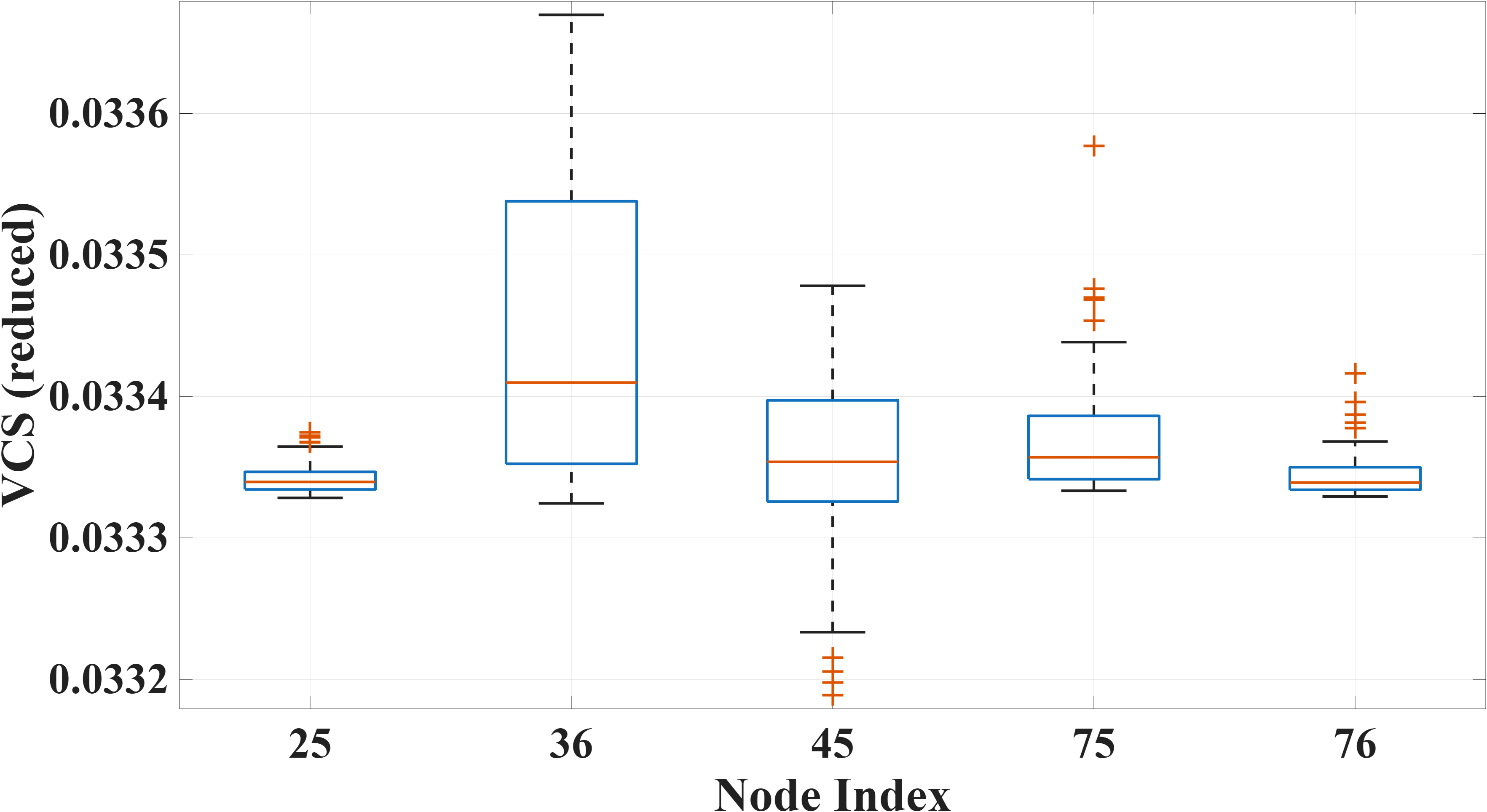}
   \end{minipage}
   \caption{Boxplots of the top 5 nodes for $(T,m)=(1,30)$: target VCS (left) and its reduced-system approximation (right).}
   \label{fig:VCS_T1_m30}
\end{figure}

\begin{figure}[t]
   \centering
   \begin{minipage}[b]{0.49\linewidth}
      \centering
      \includegraphics[width=\linewidth]{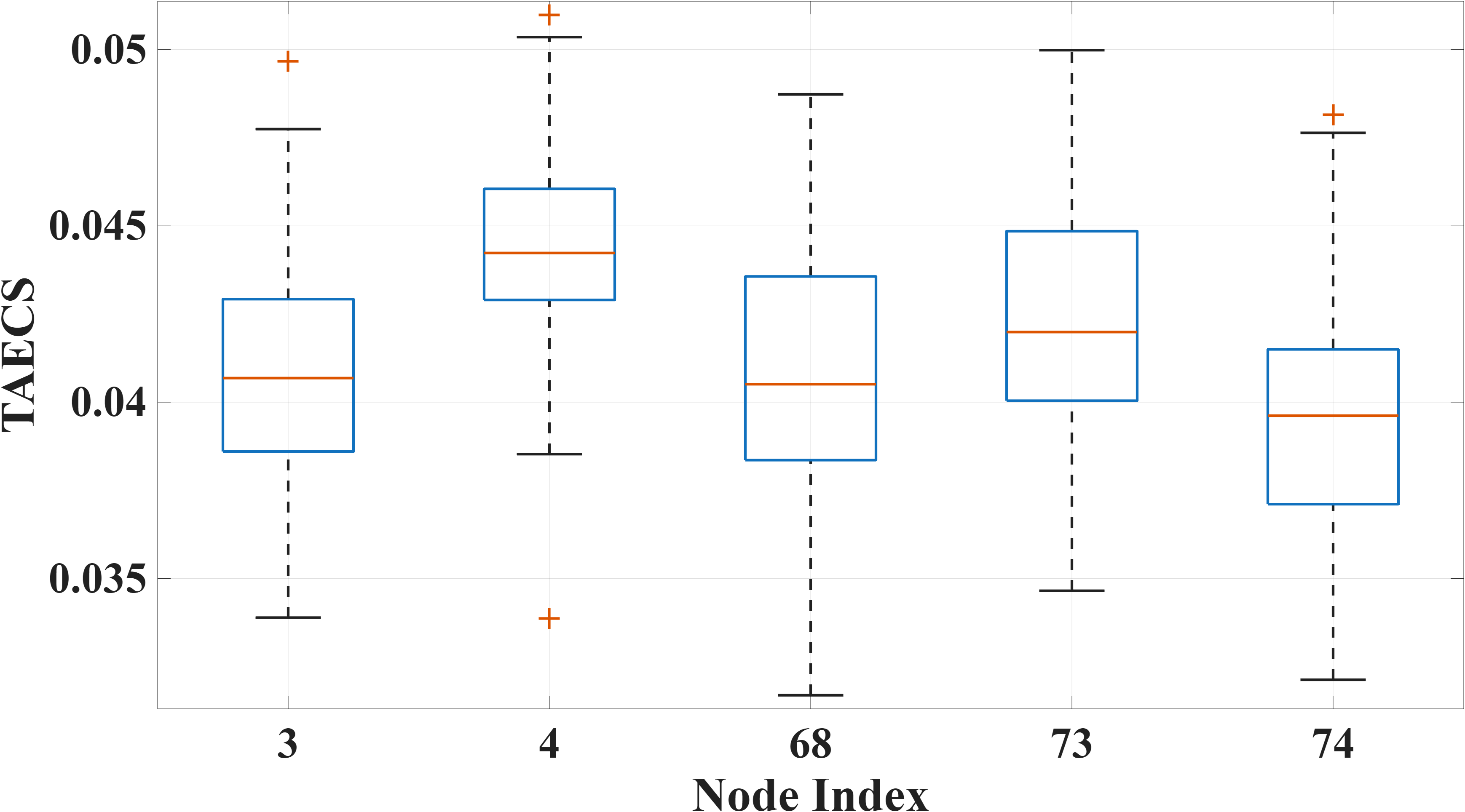}
   \end{minipage}
   \hfill
   \begin{minipage}[b]{0.49\linewidth}
      \centering
      \includegraphics[width=\linewidth]{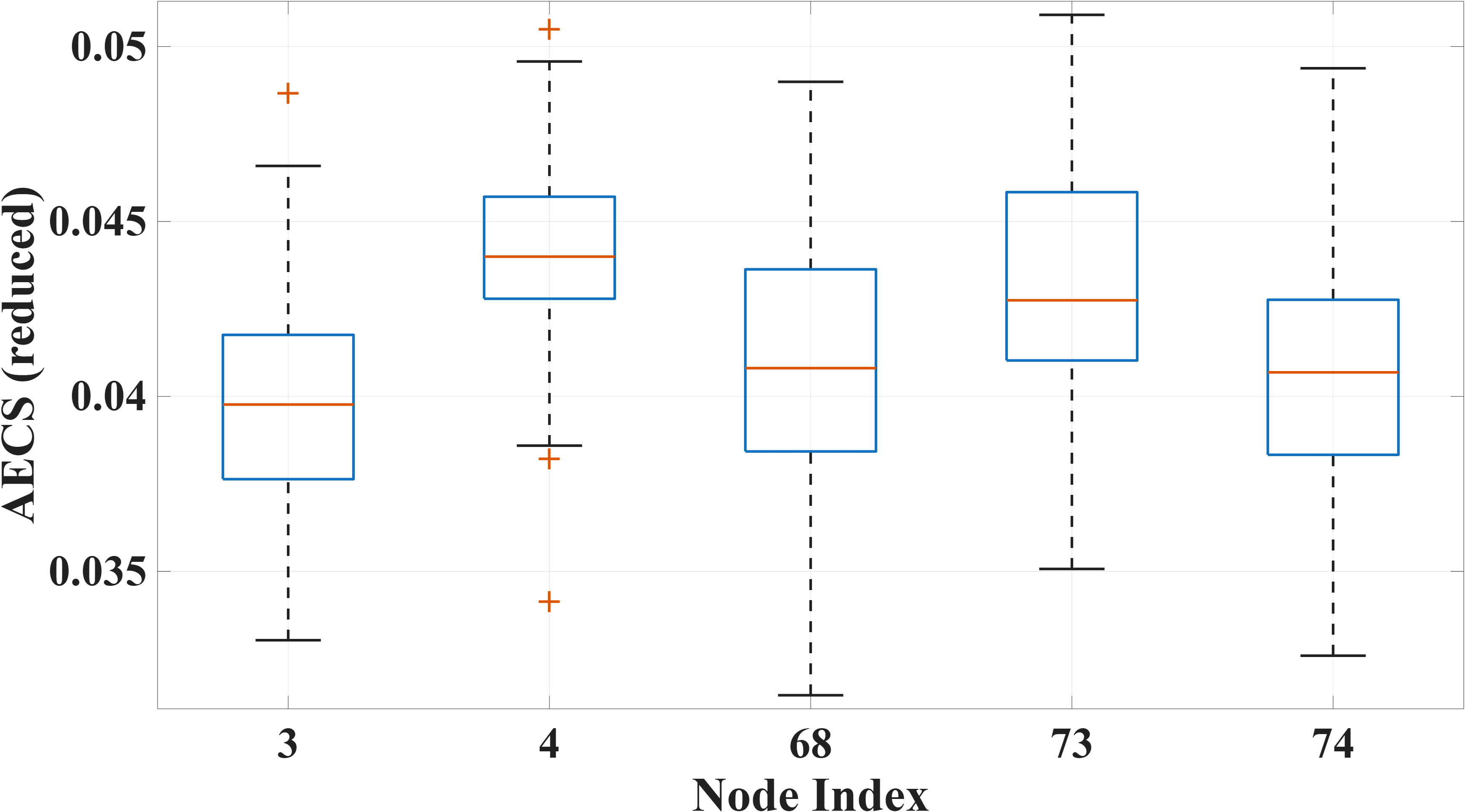}
   \end{minipage}
   \caption{Boxplots of the top 5 nodes for $(T,m)=(100,30)$: target AECS (left) and its reduced-system approximation (right).}
   \label{fig:AECS_T100_m30}
\end{figure}

\begin{figure}[t]
   \centering
   \begin{minipage}[b]{0.49\linewidth}
      \centering
      \includegraphics[width=\linewidth]{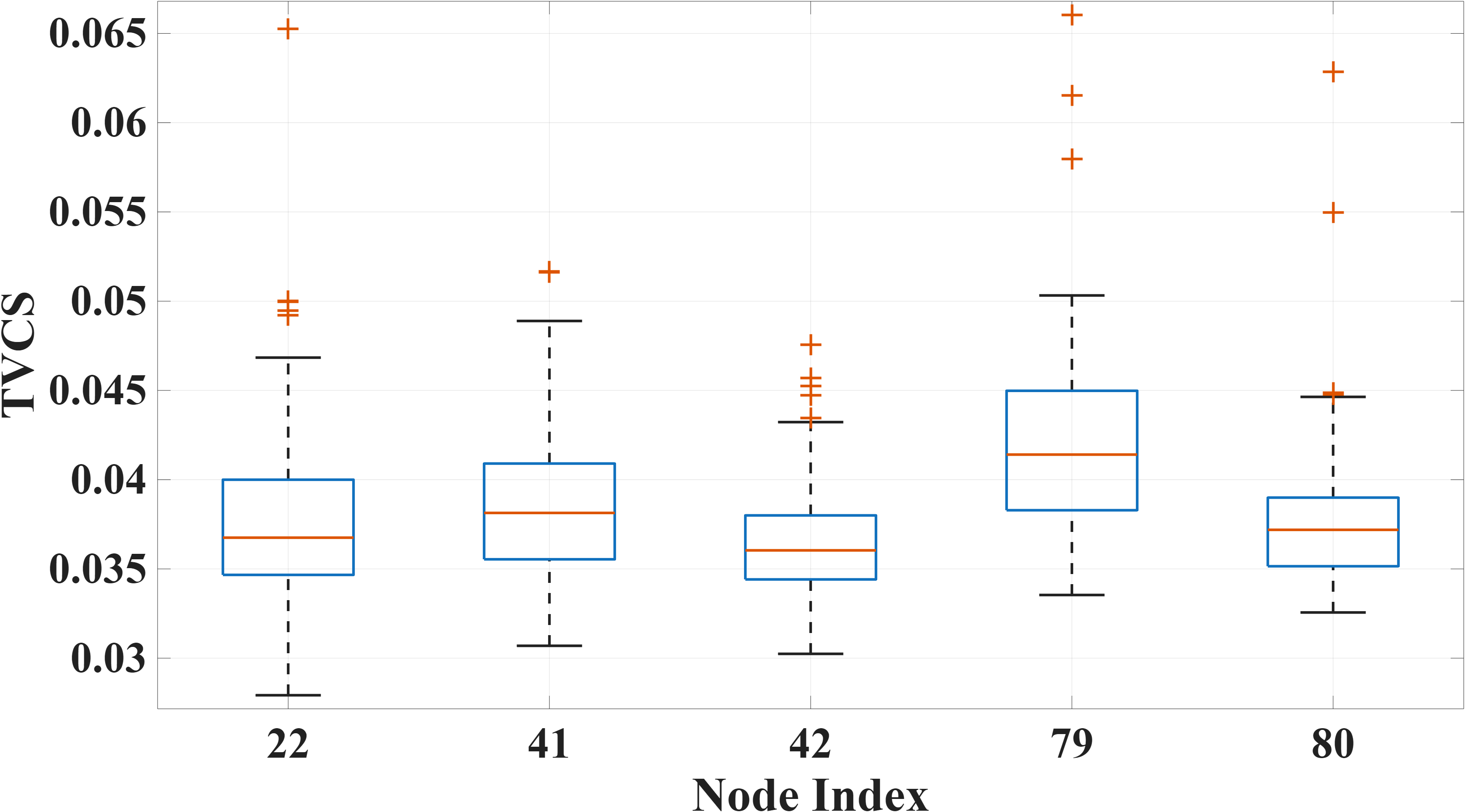}
   \end{minipage}
   \hfill
   \begin{minipage}[b]{0.49\linewidth}
      \centering
      \includegraphics[width=\linewidth]{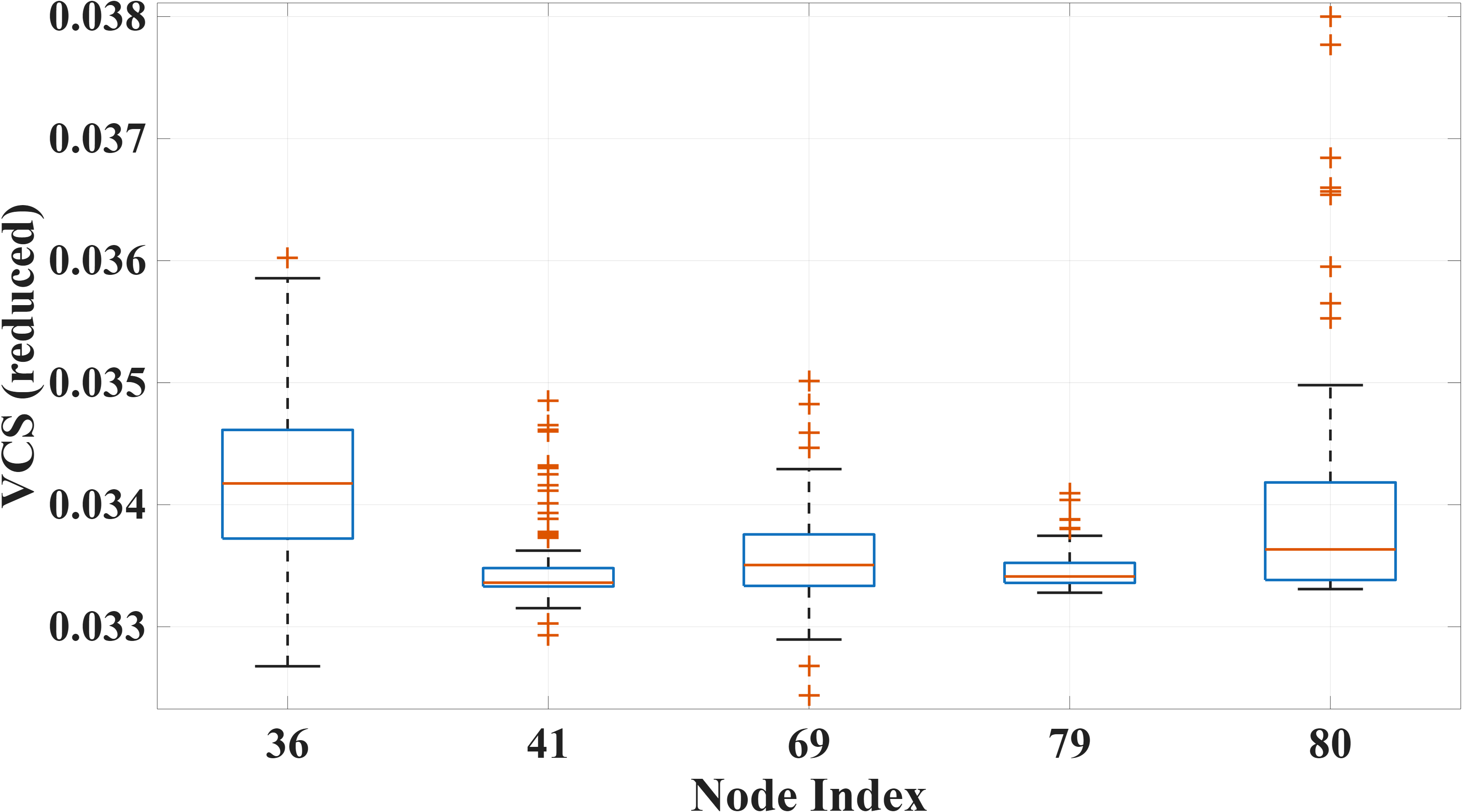}
   \end{minipage}
   \caption{Boxplots of the top 5 nodes for $(T,m)=(100,30)$: target VCS (left) and its reduced-system approximation (right).}
   \label{fig:VCS_T100_m30}
\end{figure}

\begin{table*}[t]
    \centering
    \caption{Node indices and brain regions in the top 5 rankings of TAECS or TVCS $(m=30)$.}
    \begin{tabular}{clccc}
        \toprule
        Node Index & Brain Region & TAECS $(T=1, 100)$ & TVCS $(T=1)$ & TVCS $(T=100)$ \\
        \midrule
         3  & Left Superior Frontal Gyrus   & $\checkmark$ &              &              \\
         4  & Right Superior Frontal Gyrus  & $\checkmark$ &              &              \\
        22  & Right Olfactory Cortex        &              &              & $\checkmark$ \\
        25  &  Left Middle Orbital Gyrus                            &              & $\checkmark$ &              \\
        36  & Right Cingulum Post     &              & $\checkmark$ &              \\
        41  & Left Amygdala                 &              &              & $\checkmark$ \\
        42  & Right Amygdala                &              &              & $\checkmark$ \\
        45  &   Left Cuneus       &              & $\checkmark$ &              \\
        68  & Right Precuneus               & $\checkmark$ &              &              \\
        73  & Left Putamen                  & $\checkmark$ &              &              \\
        74  & Right Putamen                 & $\checkmark$ &              &              \\
        75  &    Left Pallidum                        &              & $\checkmark$ &              \\
        76  &    Right Pallidum                        &              & $\checkmark$ &              \\
        79  & Left Heschl's Gyrus           &              &              & $\checkmark$ \\
        80  & Right Heschl's Gyrus          &              &              & $\checkmark$ \\
        \bottomrule
    \end{tabular}
    \label{tab:metrics_nodes_checked}
\end{table*}

\section{Conclusions} \label{Sec_conclusion}

\subsection{Summary} \label{Sec_summary}
This paper introduced the target controllability score (TCS) as a dynamics-aware centrality metric tailored to scenarios in which only a designated subset of nodes is actuated and only task-relevant outputs are evaluated. We formalized target VCS/AECS as solutions to finite-horizon optimization problems built on the output controllability Gramian, and established existence and (for almost all horizons) uniqueness.
In contrast to the standard full-state setting, our analysis also showed that target VCS/AECS can behave qualitatively differently, because projection onto the target nodes changes the underlying Gramian structure.
To enable scalable computation, we proposed a projected-gradient scheme and a reduced virtual system that retains the target variables while capturing their coupling to the rest of the network. Our theory shows that, for a fixed time horizon, the approximation error is mainly determined by the strength of the cross-coupling between target and non-target nodes and by the short-term contraction or growth rate of the dynamics, as quantified by the logarithmic norm.
Empirical studies on human brain networks (88 individuals) corroborate the theoretical analysis. At short horizons, both target VCS and target AECS are accurately approximated by their reduced counterparts. At longer horizons, target AECS remains robust---consistently identifying the same key regions---whereas target VCS becomes sensitive to time scale and exhibits larger approximation discrepancies. 

\subsection{Future Work Based on the Empirical and Theoretical Findings}

Building on the empirical and theoretical findings summarized in Section~\ref{Sec_summary}, an important direction for future work is to develop a horizon-robust theory for the target VCS/AECS, especially in the large-$T$ regime and for marginally stable or unstable systems. In the standard full-state setting, recent work \cite{umezu2026infinite} introduced a scaled controllability-Gramian formulation that compensates for the growth of unstable modes, thereby yielding optimization problems equivalent to the original finite-horizon ones while remaining numerically stable for large $T$. Based on this reformulation, \cite{umezu2026infinite} established well-defined infinite-horizon VCS/AECS, proved uniqueness under suitable assumptions, and showed convergence of the finite-horizon scores as $T\to\infty$. These results suggest that a similar large-$T$ theory may also be possible for the target setting. 

However, such an extension is not immediate, because the target VCS/AECS are defined not through the full-state controllability Gramian itself but through an output controllability Gramian obtained after projection onto the target nodes. Owing to this projection, the resulting Gramian does not retain the same direct structural relation to the spectral decomposition of $A$ as in the full-state case; rather, its behavior depends also on how the relevant modes are reflected through the output map and on the coupling between target and non-target nodes. For this reason, the scaling argument and infinite-horizon characterization in \cite{umezu2026infinite} do not carry over automatically.

\subsection{Future Work on Intervention Analysis for Unknown Nonlinear Systems} \label{future_nonlinear}

We explain why the target-oriented viewpoint developed in this paper may also be useful as an intermediate step toward intervention analysis for unknown nonlinear systems.
In many practical applications, the underlying dynamics are inherently nonlinear, and one may consider a system of the form
\begin{align}
\dot{x}(t)=f(x(t)), \qquad x(t)\in\mathbb{R}^n.
\label{eq:nonlinear_system}
\end{align}
For such systems, the controllability-Gramian-based framework studied in the present paper is not directly applicable.
Moreover, in data-scarce situations, it is often difficult to identify a sufficiently accurate nonlinear model and to design detailed input signals.
Nevertheless, one still needs to decide where and how strongly to intervene.

One possible route is to introduce a lifted representation of \eqref{eq:nonlinear_system}
based on Koopman operator theory for continuous-time nonlinear systems; see, e.g., \cite{brunton2022modern}.
Suppose that, through a dictionary of observables
$\psi:\mathbb{R}^n \to \mathbb{R}^N$,
we define a lifted state
$z(t):=\psi(x(t))$.
Then the nonlinear system may admit, exactly or approximately, a higher-dimensional linear surrogate model of the form
\begin{align}
\dot{z}(t)\approx A_{\psi}z(t),
\label{eq:koopman_surrogate}
\end{align}
where $A_{\psi}\in\mathbb{R}^{N\times N}$.

One may then introduce virtual inputs into the lifted surrogate model \eqref{eq:koopman_surrogate} for intervention-oriented analysis, and consider the standard controllability scores proposed in \cite{sato2022controllability} for the resulting system.
However, such an approach generally assigns importance not only to components of $z(t)$ that correspond to the original state $x(t)$, but also to auxiliary lifted coordinates that may have no direct interpretation in the original nonlinear system \eqref{eq:nonlinear_system}.
Therefore, the resulting score may not directly provide useful guidance for intervention in the original state space.

To overcome this issue, it is natural to consider a dictionary that explicitly includes the original state variables as part of the lifted coordinates, for example,
$\psi(x)=
\begin{pmatrix}
x\\
\phi(x)
\end{pmatrix}
\in\mathbb{R}^N$,
where $\phi(x)\in\mathbb{R}^{N-n}$ denotes additional observables.
In this case, the first $n$ components of $z(t)$ coincide with the original state variables $x(t)$.
Hence, instead of evaluating controllability for the entire lifted state $z(t)$, one may focus only on those components that directly correspond to the physically meaningful variables in original system \eqref{eq:nonlinear_system}.

More specifically, by introducing a virtual input matrix $B(p)$ for the lifted system
\begin{align}
\dot{z}(t)\approx A_{\psi}z(t)+B(p)u(t),
\label{eq:lifted_virtual_system}
\end{align}
and an output equation
\begin{align}
y(t)=Cz(t),
\end{align}
where $C$ selects the components of $z(t)$ corresponding to the original state variables of interest, one can define a target-oriented controllability measure for the lifted system.
For example, if one is interested in all original state variables, one may take
$C=
\begin{pmatrix}
I_n & 0
\end{pmatrix}$,
whereas if only a subset of the original variables is relevant to the intervention objective, then $C$ can be chosen as the corresponding selector matrix.

In this way, the target controllability score is computed not for all lifted coordinates, but only for the target components that retain a clear interpretation in original nonlinear system \eqref{eq:nonlinear_system}.
This may provide a more meaningful bridge from a lifted linear surrogate model \eqref{eq:koopman_surrogate} to intervention analysis for \eqref{eq:nonlinear_system}.
From this viewpoint, the target controllability score can be interpreted as a mechanism for extracting intervention-relevant information from a high-dimensional lifted representation.

A remaining open question is how to choose the dictionary $\psi$ so that the resulting target controllability score becomes both interpretable and useful for intervention-oriented decision making.
Addressing this question is an important direction for future research.

\section*{Acknowledgment}
This work was supported by JST PRESTO, Japan, Grant Number JPMJPR25K4. 

\appendix
\subsection{Example: Non-Identity of the Uniqueness Sets} \label{appendix_ex_uniqueness_set}

Consider the setting with $n=3$, $m=2$, and
\begin{align*}
A=
\begin{pmatrix}
0 & 1 & -1\\
-1 & 0  & 0\\
-1 & 0 & 0
\end{pmatrix},\quad A_{11}=\begin{pmatrix}
0 & 1\\
-1 & 0
\end{pmatrix}.
\end{align*}
Then, as shown in \cite[Section IV]{sato2022controllability},
the VCS/AECS of $\dot{x}_{\rm red}(t)=A_{11}x_{\rm red}(t)$ are not unique for $T=\pi$.

In contrast, the target VCS/AECS of $\dot{x}(t) = Ax(t)$ are unique for any $T>0$.
In fact, by a direct calculation, we have
\begin{align*}
    \exp (At) = \begin{pmatrix}
        1 & t & -t \\
        -t & 1-\frac{t^2}{2} & \frac{t^2}{2}\\
        -t & -\frac{t^2}{2} & 1 + \frac{t^2}{2}
    \end{pmatrix}.
\end{align*}
Thus, $W_1(T)$ and $W_2(T)$, defined in \eqref{output_con_Gra}, are given by
\begin{align*}
    W_1(T) &= \begin{pmatrix}
        T & -\frac{T^2}{2} \\
        -\frac{T^2}{2} & \frac{T^3}{3}
    \end{pmatrix}, \\
    W_2(T) &= \begin{pmatrix}
        \frac{T^3}{3} & \frac{T^2}{2} -\frac{T^4}{6} \\
        \frac{T^2}{2} -\frac{T^4}{6} & T -\frac{T^3}{3} + \frac{T^5}{20}
    \end{pmatrix},
\end{align*}
respectively.
Therefore, $W_1(T)$ and $W_2(T)$ are linearly independent over ${\bb R}$ for any $T>0$.
Namely, the only solution $(a, b)$ to $aW_1(T)+bW_2(T)=0$
is $a=b=0$ for any $T>0$.
Hence, Lemma \ref{lem:sufficient condition of uniqueness} guarantees that the target VCS/AECS of $\dot{x}(t) = Ax(t)$ are unique for any $T>0$.

Therefore, $D^{\rm VCS}$ (resp. $D^{\rm AECS}$) and $D_{\rm red}^{\rm VCS}$ (resp. $D^{\rm AECS}_{\rm red}$) are not identical, in general.



\bibliographystyle{IEEEtran}
\bibliography{main.bib}




%




\end{document}